\def\R{\mathbb{R}}
\def\T{\mathcal{T}}
\def\O{\mathcal{O}}
\def\Q{\mathcal{Q}}
\def\oned{\mathrm{1D}}
\def\ndof{n_{\textit{dof}}}
\def\nel{n_{\textit{el}}}
\def\ncoarse{n_{\textit{coarse}}}
\def\npatch{n_{\textit{loc}}}
\def\npatches{J}
\def\DG{\textit{DG}}
\def\IP{\textit{IP}}
\def\BR{\textit{BR2}}
\def\llb{\llbracket}
\def\rrb{\rrbracket}
\newtheorem{lem}{Lemma}
\newtheorem{thm}{Theorem}
\newtheorem{cor}{Corollary}
\newtheorem{prop}{Proposition}
\newtheorem{rem}{Remark}
\title
  [Low-order preconditioners for high-order CG and DG]
  {Efficient low-order refined preconditioners for high-order matrix-free
  continuous and discontinuous Galerkin methods}
\author{Will Pazner}
\address{Center for Applied Scientific Computing, Lawrence Livermore National
       Laboratory}
\begin{document}

\begin{abstract}
   In this paper, we design preconditioners for the matrix-free solution of high-order continuous and discontinuous Galerkin discretizations of elliptic problems based on FEM-SEM equivalence and additive Schwarz methods.
   The high-order operators are applied without forming the system matrix, making use of sum factorization for efficient evaluation.
   The system is preconditioned using a spectrally equivalent low-order ($p=1$) finite element operator discretization on a refined mesh.
   The low-order refined mesh is anisotropic and not shape regular in the polynomial degree of the high-order operator, requiring specialized solvers to treat the anisotropy.
   We make use of an element-structured, geometric multigrid V-cycle with ordered ILU(0) smoothing.
   The preconditioner is parallelized through an overlapping additive Schwarz method that is robust in $h$ and $p$.
   The method is extended to interior penalty and BR2 discontinuous Galerkin discretizations, for which it is also robust in the size of the penalty parameter.
   Numerical results are presented on a variety of examples, verifying the uniformity of the preconditioner.
\end{abstract}

\maketitle

\section{Introduction}

High-order numerical methods are playing an increasingly significant role in many areas of scientific computation in recent years \cite{Wang2013,Slotnick2014,Brown2018}.
These methods promise higher accuracy with fewer degrees of freedom, at the cost of more arithmetic operations performed per degree of freedom.
Because of their high arithmetic intensity, high-order methods have been seen as promising candidates to run on GPUs and accelerator-based architectures \cite{Klockner2009,Vermeire2017}.
Due to the restrictive memory limitations on these architectures, much of the past research in this area has focused on problems with explicit time integration, thus avoiding the solution of large linear systems of equations.
In this work, we study the iterative solution to the linear systems arising from high-order finite element discretizations in the matrix-free context, avoiding the restrictive memory cost of assembling the system matrix.

A standard finite element or discontinuous Galerkin method with polynomial degree $p$ will result in coupling between all the degrees of freedom within a single element (and with a subset of the degrees of freedom of neighboring elements).
The number of degrees of freedom per element scales like $\mathcal{O}(p^d)$ in $d$ spatial dimensions, and as a result, the number of couplings (i.e.\ the number of nonzeros per row of the system matrix) will also scale like $\mathcal{O}(p^d)$.
Consequently, the system matrix will have $\mathcal{O}(p^{2d})$ nonzero entries, and thus the memory required to form the matrix system matrix grows quadratically with the number of degrees of freedom under $p$-refinement.
This is in contrast to $h$-refinement, in which the memory required for the system matrix grows only linearly with the number of degrees of freedom.
This motivates the use of matrix-free operator evaluation, whereby the action of the operator applied to a vector is computed without forming the system matrix.

In the matrix-free context, the memory requirements for the high-order operator can be reduced to $\mathcal{O}(p^d)$.
Naive implementations of the action of the operator will still require $\mathcal{O}(p^{2d})$ operations.
However, making use of sum factorization techniques, this complexity can be reduced to $\mathcal{O}(dp^{d+1})$ \cite{Orszag1980,Pazner2018}.
These techniques have been shown to be effective on modern many-core and accelerator-based architectures \cite{Brown2010,Muthing2017}.
In addition to efficient algorithms for the evaluation of the action of the operator, solving the resulting linear systems in a practical setting also requires effective preconditioners.
The development of preconditioners in the matrix-free setting is particularly challenging, because traditional matrix-based methods such as Gauss-Seidel or algebraic multigrid require access to the entries of the matrix \cite{Ruge1987}.
Furthermore, element-based domain decomposition methods such as additive and multiplicative Schwarz methods necessitate the solution of local problems, which, if performed by means of a direct solver, requires $\mathcal{O}(p^{3d})$ operations \cite{Toselli2005}.

There has been significant past work on the development of preconditioners suitable for use in the high-order matrix-free context.
Much of this work has made use of related low-order, sparse discretizations in order to construct preconditioners for the high-order system, an idea originally introduced by Orszag in 1980, and since built upon by numerous others \cite{Orszag1980,Deville1990,Canuto1985,Canuto2010}.
New low-order methods for preconditioning high-order spectral element discretizations were developed in \cite{BelloMaldonado2019}.
The automatic construction of sparse preconditioners for high-order finite element discretizations was considered in \cite{Austin2012}.
Matrix-free multigrid methods using polynomial smoothers (Cf.\ \cite{Adams2003}) were considered in \cite{Kronbichler2019}.
Matrix-free approximate block Jacobi methods using Kronecker product approximations were constructed for discontinuous Galerkin discretizations of conservation laws in \cite{Pazner2018} and extended to interior penalty discretizations in \cite{Pazner2018b}.
Overlapping Schwarz solvers for the spectral element discretization of the Navier-Stokes equations were proposed in \cite{Fischer1997}.
Hybrid multigrid solvers with Schwarz smoothers for high-order spectral element discretizations were developed in \cite{Lottes2005}, and extended to the Navier-Stokes equations in \cite{Fischer2005}.
These Schwarz smoothers are based on solving structured subdomain problems using the fast diagonalization method.

In this work, we construct preconditioners for high-order finite element discretizations based on corresponding low-order ($p=1$) finite element discretizations formed on a refined mesh.
The refined mesh is generated using a structured grid of Gauss-Lobatto points within each element.
These preconditioners make use of the spectral equivalence between the two discretizations, widely known as the finite element method--spectral element method (FEM-SEM) equivalence \cite{Canuto2007}.
The resulting low-order discretization is sparse, and, because the polynomial degree is fixed, its system matrix can be formed using only constant memory per degree of freedom.
In order to precondition the high-order system, it is necessary to solve the linear system corresponding to the low-order system.
This is challenging because the low-order refined mesh is anisotropic and not shape-regular with respect to $p$.
Thus, standard algebraic multigrid or geometric multigrid methods with pointwise smoothers do not converge uniformly with respect to $p$.
To address this issue, we develop ordered ILU smoothers that perform the function of line relaxation in the context of unstructured meshes.
Distinguishing this solver from other Schwarz-based methods, the use of ordered ILU smoothing allows the solver to handle domains with highly anisotropic meshes.
The method is parallelized using an additive Schwarz method based on overlapping subdomains.
The resulting preconditioner is robust in the polynomial degree $p$ and mesh size $h$.
The preconditioner can also be extended to discontinuous Galerkin methods, for which it is also robust in the size of the penalty parameter $\eta$.

The structure of the paper is as follows.
In Section \ref{sec:problem} we introduce the model problem and finite element discretization.
Then, in Section \ref{sec:matrix-free} we design and analyze the matrix-free preconditioners considered in this paper.
In Section \ref{sec:numerical}, we present a variety of numerical results using finite element and discontinuous Galerkin discretizations on structured and unstructured meshes.
We end with conclusions in Section \ref{sec:conclusions}.

\section{Model problem and discretization} \label{sec:problem}

In this work, we will consider the solution to the model Poisson problem with homogeneous Dirichlet boundary conditions,
\begin{equation} \label{eq:poisson}
   -\nabla \cdot (b \nabla u) = f \quad \text{in $\Omega$},
   \qquad u = 0 \quad \text{on $\partial\Omega$},
\end{equation}
where the coefficient $b$ and right-hand side $f$ are sufficiently smooth, and $b$ is bounded away from zero on the spatial domain $\Omega \subseteq \R^d$, $d \in \{2,3\}$.
To simplify the exposition, in what follows we will take $b \equiv 1$.
However, numerical examples with variable coefficients are considered in Section \ref{sec:numerical}.
The solution to this problem is approximated using a high-order finite element method.
To begin, we consider standard $H^1$ conforming discretizations.
Discontinuous Galerkin discretizations of this problem are presented in Section \ref{sec:dg}.
The spatial domain $\Omega \subseteq \R^d$ is tessellated using a mesh of tensor-product elements denoted $\T_p$.
An element $D \in \T_p$ is given as the image of the unit cube $[0,1]^d$ under a suitable transformation mapping.
We introduce the usual continuous piecewise polynomial finite element space of degree at most $p$ in each variable on the mesh $\T_p$, which we denote $V_p$.

The finite element problem corresponding to \eqref{eq:poisson} is to find $u_p \in V_p$ such that
\begin{equation} \label{eq:fem}
   ( \nabla u_p, \nabla v_p ) = (f, v_p), \qquad \text{for all $v_p \in V_p$,}
\end{equation}
where $(\cdot, \cdot)$ denotes the standard $L^2$ inner product on $\Omega$, defined by
\begin{equation}
   (u, v) = \int_\Omega u(\bm x) v(\bm x) \, d\bm{x}.
\end{equation}
Introducing a basis for the space $V_p$, we can write \eqref{eq:fem} as the algebraic system
\begin{equation} \label{eq:system}
   K_p \bm{u} = M_p \bm{f},
\end{equation}
where $K_p$ is the stiffness matrix corresponding to the bilinear form $a(\cdot, \cdot) = (\nabla\cdot,\nabla\cdot)$, $M_p$ is the mass matrix corresponding to the $L^2$ inner product $(\cdot,\cdot)$, and $\bm u$ and $\bm f$ are vectors of coefficients in the chosen basis.
It is well-known that the system of equations \eqref{eq:system} quickly becomes ill-conditioned when the polynomial degree $p$ is large or the mesh spacing $h$ is small.
For that reason, effective preconditioners are required for the efficient iterative solution of \eqref{eq:system}.
Additionally, the number of nonzeros in the stiffness and mass matrices scales like $\mathcal{O}(p^{2d})$.
Naive assembly of this matrix (using local dense matrix-matrix products) requires $\mathcal{O}(p^{3d})$ operations, and sum-factorized matrix assembly can be performed in $\mathcal{O}(p^{2d+1})$ operations \cite{Melenk2001}.
This computational cost, in terms of both memory and arithmetic operations, is seen to be prohibitive for high or even moderate $p$, especially in three dimensions.
On the other hand, the action of the operators $K_p$ and $M_p$ may be computed using techniques such as sum factorization with $\mathcal{O}(p^{d+1})$ operations and $\mathcal{O}(p^d)$ memory \cite{Orszag1980}.
The purpose of this paper is the development of \textit{matrix-free} preconditioners for \eqref{eq:system}, which can be constructed without explicit knowledge of the entries of the matrices $K_p$ and $M_p$, whose total memory cost scales like the number of degrees of freedom ($\mathcal{O}(p^d)$), and whose computational complexity is no more than that of applying the operator.

\subsection{Discontinuous Galerkin Methods} \label{sec:dg}

Also of interest are discontinuous Galerkin discretizations of \eqref{eq:poisson}.
Let $V_{\DG}$ denote the degree-$p$ discontinuous piecewise polynomial space defined on the mesh $\T_p$.
No continuity is enforced between the elements of the mesh.
There are a wide range of DG discretizations for elliptic problems \cite{Arnold2002}.
We consider the symmetric interior penalty (IP) discretization \cite{Arnold1982}, whose formulation is as follows: find $u_{\DG} \in V_{\DG}$ such that
\begin{equation} \label{eq:ip}
   a_{\IP}(u_{\DG}, v_{\DG}) =
   (\nabla u_{\DG}, \nabla v_{\DG})
      - \langle \{ \nabla u_{\DG} \}, \llb v_{\DG} \rrb \rangle
      - \langle \llb u_{\DG} \rrb, \{ \nabla v_{\DG} \} \rangle
      + \langle \sigma \llb u_{\DG} \rrb, \llb v_{\DG} \rrb \rangle
      = (f, v_{\DG}),
\end{equation}
for all $v_{\DG} \in V_{\DG}$.
Here, $\langle \cdot, \cdot \rangle$ denotes integration over the union of all faces of elements $D \in \T_p$, which we denote by $\Gamma$.
Consider a common face shared by two mesh elements, $D^-$ and $D^+$.
Then, $\{\cdot\}$ and $\llb \cdot \rrb$ denote the average and jump operators, respectively, defined by
\begin{equation}
   \{ \phi \} = \frac{1}{2}( \phi^- + \phi^+), \qquad
   \llb \phi \rrb = \phi^- \bm n^- + \phi^+ \bm n^+,
\end{equation}
where the superscripts $\pm$ denote the traces from within the elements $D^\pm$.
Similarly, $\bm n^\pm$ denotes the outward facing normal vector from within $D^\pm$.

The parameter $\sigma$ in \eqref{eq:ip} is a penalty parameter, which is required to be sufficiently large in order to stabilize the method.
In particular, $\sigma$ must be chosen to scale like $p^2/h$ to ensure that the system is positive-definite \cite{Arnold2002}, and so we write $\sigma = \eta p^2/h$.
However, large values of the penalty parameter result in ill-conditioned systems \cite{Shahbazi2005}.
In this paper, we will seek to design preconditioners for the DG system \eqref{eq:ip} whose convergence is independent of the value of the penalty parameter.

In addition to the symmetric interior penalty discretization \eqref{eq:ip}, we are also interested in alternative DG discretizations, such as the second method of Bassi and Rebay (BR2) \cite{Bassi1997b}.
This method makes use of a different stabilization scheme based on lifting operators.
In the BR2 method, the penalty term $\langle \sigma \llb u_{\DG} \rrb, \llb v_{\DG} \rrb \rangle$ in \eqref{eq:ip} is replaced by an alternative penalty term $\langle \alpha(u_{\DG}) , \llb v_{\DG} \rrb \rangle$.
The term $\alpha(u_{\DG})$ is defined on a face $e$ by $\alpha(u_{\DG}) = -\eta \{ r_e(\llb u_{\DG} \rrb) \}$, where the lifting operator $r_e$ is given by
\begin{equation}
   \int_\Omega r_e(\bm \varphi) \cdot \bm \tau \, dx
      = - \int_e \bm \varphi \cdot \{ \bm \tau \} \, ds.
\end{equation}
We can see that the stabilization term satisfies
$
\langle \alpha(u_{\DG}) , \llb v_{\DG} \rrb \rangle
   = \sum_e \left( \eta r_e(\llb u_{\DG} \rrb), r_e(\llb v_{\DG} \rrb) \right),
$
and thus the BR2 bilinear form $a_{\BR}(\cdot,\cdot)$ can be written
\begin{equation} \label{eq:br2}
   a_{\BR}(u_{\DG}, v_{\DG}) =
   (\nabla u_{\DG}, \nabla v_{\DG})
      - \langle \{ \nabla u_{\DG} \}, \llb v_{\DG} \rrb \rangle
      - \langle \llb u_{\DG} \rrb, \{ \nabla v_{\DG} \} \rangle
      + \sum_e \left(\eta r_e(\llb u_{\DG} \rrb), r_e(\llb v_{\DG} \rrb)\right).
\end{equation}
The BR2 method has the advantage that the coefficient $\eta$ of the stabilization term can be chosen to be $\mathcal{O}(1)$, and is not required to scale with the mesh spacing \cite{Brezzi2000}.
However, multigrid solvers often struggle with discretizations involving lifting operators \cite{Antonietti2015,Olson2011,Fortunato2019}.

\section{Matrix-free preconditioning} \label{sec:matrix-free}

We now describe the construction of a class of matrix-free preconditioners for the high-order Poisson problem \eqref{eq:system}.
These preconditioners are based on a structured, geometric multigrid V-cycle applied to a low-order refined discretization described in Section \ref{sec:fem-sem}.
Parallelization of the method is performed using an overlapping additive Schwarz method, described and analyzed in Section \ref{sec:as}.
The low-order formulation gives rise to anisotropy that is treated by means of line smoothing.
An algebraic alternative to line smoothing suitable for use on unstructured meshes is ordered incomplete LU (ILU), which is considered in Section \ref{sec:ilu}.
The extension to discontinuous Galerkin discretizations, also by means of an additive Schwarz method, is described in \ref{sec:dg-precond}.

\begin{rem}
Throughout what follows, we will use some notational conventions.
We will use $x \lesssim y$ and $x \gtrsim y$ to mean $x \leq C y$ and $x \geq C y$, respectively, where $C$ is a generic constant that is independent of the relevant discretization parameters (e.g.\ $h$ and $p$, which represent the mesh spacing and polynomial degree of the high-order problem).
When dealing with high-order objects (e.g.\ spaces, meshes, and operators) we will generally use a subscript $p$.
Likewise, a subscript $h$ will be used to indicate the corresponding low-order (i.e.\ multilinear) object.
\end{rem}

\subsection{Low-order refined equivalence} \label{sec:fem-sem}

The main tool used in the construction of our preconditioners is the spectral equivalence between the high-order finite element operator $K_p$ and a low-order finite element operator $K_h$ defined on the \textit{low-order refined} mesh.
Essential to the construction of the refined mesh will be a structured sub-grid of Gauss-Lobatto point within each element, which will give rise to the spectral equivalence between the high-order and low-order operators~\cite{Canuto2006,Canuto1982}.
This refined mesh, which, for simplicity of exposition, we assume in this section to be affine, is defined by subdividing each mesh element $D \in \T_p$ into mapped images of parallelepipeds with vertices at adjacent Gauss-Lobatto nodes in each dimension.
The resulting refined mesh is denoted $\T_h$.
The low-order stiffness matrix $K_h$ corresponds to a standard multilinear ($p=1$) finite-element discretization on the mesh $\T_h$.
This low-order refined stiffness matrix has two important properties that make it a suitable preconditioner in our setting.
First, $K_h$ is much sparser than $K_p$: the number of non-zeros per row of this matrix is bounded independently of the polynomial degree $p$ of the high-order operator.
Secondly, the low-order operator $K_h$ is spectrally equivalent to the high-order operator $K_p$, where the constants of equivalence are independent of $p$.
This equivalence is often referred to as FEM-SEM equivalence \cite{Chalmers2018,Canuto2007}.

Following the work of Canuto, Hussaini, Quarteroni, and Zang \cite{Canuto2007,Canuto2006} and Canuto, Gervasio, and Quarteroni \cite{Canuto2010}, we give a very brief overview of the FEM-SEM equivalence.
First, it is useful to establish the spectral equivalence in one spatial dimension between a Galerkin spectral method (``G-NI'') and piecewise linear finite elements on the Gauss-Lobatto points on the unit interval $[0,1]$.
Given a degree-$p$ polynomial $\phi_p$ defined on $[0,1]$, we associate to it its piecewise linear interpolant at the $p+1$ Gauss-Lobatto points, denoted $\phi_h$.
We then have the following result due to Canuto. \begin{prop}[{[\citenum{Canuto1994}, Propositions 2.1 and 2.2]}] There exist constants $c$ and $c'$ independent of $p$ such that
\[
   \frac{1}{c} \| \phi_p \|_{L^2([0,1])} \leq \| \phi_h \|_{L^2([0,1])} \leq
   c \| \phi_p \|_{L^2([0,1])}
\]
and
\[
   \frac{1}{c'} \| \phi_p' \|_{L^2([0,1])} \leq \| \phi_h' \|_{L^2([0,1])} \leq
   c' \| \phi_p' \|_{L^2([0,1])}.
\]
\end{prop}
This equivalence in $L^2$ norm and $H^1$ seminorm immediately gives spectral equivalence of the constant-coefficient one-dimensional spectral and finite element mass and stiffness matrices, which we denote here by $M^{\oned}_p, M^{\oned}_h, K^{\oned}_p,$ and $K^{\oned}_h$.
The extension to multiple space dimensions is straightforward when using tensor-product elements.
Define $K^j_p$ by
\[
   K^j_p = \bigotimes_{i=1}^d G_i^j, \qquad\text{where }
   G_i^j = \begin{cases}
      K^{\oned}_p &\quad \text{if $i=j$,} \\
      M^{\oned}_p &\quad \text{if $i\neq j$,}
   \end{cases}
\]
i.e.\ $K^j_p$ is given by the Kronecker product of $(d-1)$ one-dimensional mass matrices, and the one-dimensional stiffness matrix in the $j$th place.
We define $K^j_h$ analogously.
The mass and stiffness matrices on the $d$-dimensional unit cube $[0,1]^d$ are then given by
\begin{align*}
   M_p &= \bigotimes_{i=1}^d M^{\oned}_p, \\
   M_h &= \bigotimes_{i=1}^d M^{\oned}_h, \\
   K_p &= \sum_{j=1}^d K^j_p,\\
   K_h &= \sum_{j=1}^d K^j_h.
\end{align*}
Spectral equivalence of the $d$-dimensional operators follows from the product formula for eigenvalues of Kronecker products and the Courant-Fischer min-max theorem.
Finally, a simple Rayleigh quotient argument allows us to conclude the following spectral equivalence of the low-order and high-order operators.
\begin{prop}[Cf.\ section 6.3.4 of \cite{Canuto2007}] \label{prop:fem-sem}
There exist constants $c$ and $C$, independent of $h$ and $p$ such that
\[
   c \bm v^T K_h \bm v \leq \bm v^T K_p \bm v \leq C \bm v^T K_h \bm v,
\]
and
\[
   c \bm v^T M_h \bm v \leq \bm v^T M_p \bm v \leq C \bm v^T M_h \bm v,
\]
where $M_p$ and $K_p$ are the high-order finite element operators from \eqref{eq:system} defined on the mesh $\T_p$, and $M_h$ and $K_h$ are the low-order operators defined on the Gauss-Lobatto low-order refined mesh $\T_h$.
\end{prop}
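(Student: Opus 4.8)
The plan is to lift the one-dimensional norm equivalence of the preceding Proposition to a matrix inequality and then propagate it through the tensor-product structure, so I will first reduce to the reference cube $[0,1]^d$. The key observation is that if one expands both $\phi_p$ and its piecewise-linear Gauss--Lobatto interpolant $\phi_h$ in the \emph{nodal} Lagrange bases associated with the $p+1$ Gauss--Lobatto points, the two functions share the same coefficient vector $\bm v$, namely the vector of nodal values. Hence $\bm v^T M^{\oned}_p \bm v = \|\phi_p\|_{L^2}^2$ and $\bm v^T M^{\oned}_h \bm v = \|\phi_h\|_{L^2}^2$, with the analogous identities for the stiffness matrices and the $H^1$ seminorm. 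Squaring the constants $c, c'$ of the Proposition converts the norm equivalences into the Loewner inequalities $c^{-2} M^{\oned}_p \preceq M^{\oned}_h \preceq c^2 M^{\oned}_p$ and $(c')^{-2} K^{\oned}_p \preceq K^{\oned}_h \preceq (c')^2 K^{\oned}_p$.

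For the mass matrices, which are genuine Kronecker products $M_p = \bigotimes_i M^{\oned}_p$ and $M_h = \bigotimes_i M^{\oned}_h$, I would argue by simultaneous diagonalization of the one-dimensional pencil. Since $M^{\oned}_p$ and $M^{\oned}_h$ are symmetric positive definite, there is an invertible $S$ with $S^T M^{\oned}_h S = I$ and $S^T M^{\oned}_p S = \Lambda$ diagonal, the entries of $\Lambda$ being the generalized eigenvalues of the pencil and hence lying in $[c^{-2}, c^2]$. Taking $T = S \otimes \cdots \otimes S$ and using $T^T \big(\bigotimes_i A_i\big) T = \bigotimes_i (S^T A_i S)$ gives $T^T M_h T = I$ and $T^T M_p T = \Lambda \otimes \cdots \otimes \Lambda$; by the product formula for Kronecker eigenvalues (equivalently, by Courant--Fischer) the generalized eigenvalues of $(M_p, M_h)$ are products of $d$ one-dimensional eigenvalues and therefore lie in $[c^{-2d}, c^{2d}]$, which is exactly the bound $c^{-2d} M_h \preceq M_p \preceq c^{2d} M_h$.

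The stiffness matrices are where I expect the real difficulty, for two reasons: $K_p = \sum_{j=1}^d K^j_p$ is a \emph{sum} of Kronecker products rather than a single one, so the congruence argument above does not transfer directly, and the one-dimensional stiffness matrix $K^{\oned}_p$ is only positive \emph{semi}-definite (its kernel is the constant vector), so the Kronecker factors need not be invertible. I would instead invoke the monotonicity of the Kronecker product under the Loewner order: if $0 \preceq X \preceq Y$ and $P, Q \succeq 0$, then $P \otimes X \otimes Q \preceq P \otimes Y \otimes Q$, since the Kronecker product of positive-semidefinite matrices is positive-semidefinite. Replacing the one-dimensional factors of $K^j_p$ by their low-order counterparts one at a time --- each replacement contributing a factor $(c')^2$ from the single stiffness slot or $c^2$ from each of the $d-1$ mass slots --- yields $C_0^{-1} K^j_h \preceq K^j_p \preceq C_0 K^j_h$ with $C_0 = (c')^2 c^{2(d-1)}$, uniformly in $j$ and independent of $p$. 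Because the Loewner order is additive, summing over $j$ gives $C_0^{-1} K_h \preceq K_p \preceq C_0 K_h$ on the reference cube.

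Finally, to pass from the reference cube to the operators assembled on the affine mesh $\T_p$ and its Gauss--Lobatto refinement $\T_h$, I would use that the global quadratic form is a sum of element contributions and that on an affine element the local matrices are obtained from the reference matrices by the element-constant geometric factors (the Jacobian determinant for the mass matrix, the metric tensor $J^{-1} J^{-T} |J|$ for the stiffness). Since these factors are shared by the high-order element and its refinement, the reference inequalities transfer element by element with $p$- and $h$-independent constants (immediately in the axis-aligned case, where the metric is diagonal and the term-by-term bounds of the previous paragraph apply slot-by-slot; and for general shape-regular affine maps by the same reasoning applied to the weighted gradient form, as in Canuto \S6.3.4), and assembling the positive-semidefinite element contributions --- together with restricting to the subspace enforcing the homogeneous Dirichlet condition --- preserves the two-sided bound and yields the stated equivalence.
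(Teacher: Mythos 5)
Your proof is correct, and its skeleton---one-dimensional equivalence, tensorization to the reference cube, then element-by-element assembly---is the same as the paper's. Where you genuinely differ is the tensorization of the stiffness matrices: the paper invokes the product formula for eigenvalues of Kronecker products together with the Courant--Fischer min-max theorem, whereas you replace the one-dimensional factors one at a time using Loewner-order monotonicity ($X \preceq Y$ and $P, Q \succeq 0$ imply $P \otimes X \otimes Q \preceq P \otimes Y \otimes Q$) and then use additivity of the order to handle the sum $K_p = \sum_j K^j_p$. Your device has two advantages: it never requires the Kronecker factors to be invertible, so the semidefiniteness of $K^{\oned}_p$ (which you correctly flag, and which an eigenvalue-pencil argument must work around by quotienting out the shared kernel of constants) costs nothing; and it produces the explicit constant $(c')^2 c^{2(d-1)}$. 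The paper's eigenvalue route is marginally shorter for the pure Kronecker products $M_p$ versus $M_h$, which you also recover via simultaneous diagonalization, and the paper compresses your entire final paragraph into the phrase ``a simple Rayleigh quotient argument.'' One caution on that last step: for a general affine element the pulled-back form carries a constant metric $G = |J|\, J^{-1} J^{-T}$ whose cross-derivative terms are invisible to the tensor-product decomposition, so ``the same reasoning applied to the weighted gradient form'' should be spelled out as the sandwich $\lambda_{\min}(G)\, I \preceq G \preceq \lambda_{\max}(G)\, I$, which transfers the unweighted equivalence at the price of a factor $\kappa(G)$, bounded by the shape regularity of $\T_p$ independently of $p$ and $h$; with that made explicit, your argument is complete.
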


As a consequence of this result, a good preconditioner for the low-order refined system $K_h$ will also be a good preconditioner for the original high-order system $K_p$.
Thus, we devote our attention to the construction of robust and scalable preconditioners for the systems $K_h$, which are obtained through a low-order refinement procedure.

\begin{rem}
The main challenge in designing a preconditioner for the low-order refined operator $K_h$ is that the resulting mesh is anisotropic.
The spacing of Gauss-Lobatto points near the interval endpoints scales like $1/p^2$, and the spacing of Gauss-Lobatto points near the midpoint of the interval scales like $1/p$ \cite{Brix2014}.
Thus, the aspect ratio of the parallelepipeds making up the Cartesian grid generated from the one-dimensional Gauss-Lobatto points scales like $p$.
As a consequence, the low-order refined meshes $\T_h$ are not shape regular in $p$, and we can expect degraded convergence of multigrid-type algorithms \cite{Hackbusch1985}.
\end{rem}

\subsection{Additive Schwarz and parallel subspace correction methods}
\label{sec:as}

In order to reduce the global problem $K_h$ into a set of smaller, local problems that can be solved in parallel, we make use of the additive Schwarz domain decomposition framework \cite{Toselli2005,Dryja1990,Pavarino1993}.
The main idea of the additive Schwarz method is to decompose the finite element space $V_h$ into a sum of subspaces
\begin{equation}
   V_h = V_0 + V_1 + \cdots + V_J.
\end{equation}
At this point, it will be useful to introduce some notation.
We define the operator $A_h : V_h \to V_h$ by $(A_h u_h, v_h) = a(u_h,v_h)$ where $(\cdot, \cdot)$ is the $L^2$ inner product, and $a(u_h, v_h) = (\nabla u_h, \nabla v_h)$.
Furthermore, for each subspace $V_j$, we define the elliptic projection $P_j : V_h \to V_j$, satisfying
\begin{equation} \label{eq:elliptic-proj}
   a(P_j u_h, v_j) = a(u_h, v_j) \qquad \text{for all $v_j \in V_j$},
\end{equation}
and the $L^2$ projection $Q_j : V_h \to V_j$ satisfying
\begin{equation} \label{eq:l2-proj}
   (Q_j u_h, v_j) = (u_h, v_j) \qquad \text{for all $v_j \in V_j$}.
\end{equation}
On each subspace $V_j$, let $A_j$ be the restriction of $A_h$ to $V_j$.
Then, we have the following useful identity,
\begin{equation} \label{eq:identity}
   A_j P_j = Q_j A_h,
\end{equation}
and so $P_j = A_j^{-1} Q_j A_h$.
However, it is usually not feasible to invert the subspace operators $A_j$ exactly, and thus we introduce approximate local solvers $R_j \approx A_j^{-1}$.
The additive Schwarz preconditioner is then given by
\begin{equation} \label{eq:as}
   B = \sum_{j=0}^J R_j Q_j.
\end{equation}
Each of the local approximate solvers $R_j$ may be applied in parallel.
Thus, this method is also often referred to as the method of parallel subspace corrections \cite{Xu1992}.

In what follows, we will be interested in studying the convergence properties of a particular choice of additive Schwarz method.
The main quantity we are interested in studying is the \textit{iterative condition number} of the preconditioned system.
For a generic operator $A$, which we assume is similar to a symmetric positive-definite matrix, the iterative condition number is defined as the ratio of largest eigenvalue to the smallest eigenvalue,
\begin{equation}
   \kappa(A) = \frac{\lambda_{\rm max}(A)}{\lambda_{\rm min}(A)}.
\end{equation}
This quantity will control the speed of convergence of the preconditioned system when used with a Krylov subspace method such as conjugate gradients.

We now define the space decomposition used for the present solver.
The space $V_0$ is taken to be a coarse subspace, defined as the multilinear ($p=1$) finite element space on the original (coarse) mesh $\T_p$.
$V_0$ consists of piecewise polynomials of degree $p=1$, whose degrees of freedom are defined at the (low-order) vertices of the coarse mesh $\T_p$.
Note that the size of $V_0$ is independent of the polynomial degree $p$ of the high-order space.
Since we are interested in the case of high polynomial degree $p$, the mesh $\T_p$ is typically fairly coarse, and thus the space $V_0$ is only a small fraction of the size of the space $V_h$ (which is refined according to the degree $p$).
However, this space will usually still be too large for use with direct solvers.
In this work, we choose the coarse solver $R_0$ to be given by one V-cycle of the algebraic multigrid method BoomerAMG implemented in \textit{hypre} \cite{Falgout2002,Henson2002}.
In the massively parallel setting, the choice of coarse solver can be quite important.
We choose to use \textit{hypre} for this component of the solver because of its demonstrated scalability.
In principle, any scalable solver suitable for standard low-order finite element discretizations on unstructured meshes can be used for $R_0$.
Our experience suggests that for relatively small problems with coarse meshes at high order the convergence of the preconditioner $B$ is often not overly sensitive to the choice of coarse solver $R_0$, and in this case the cost of applying $R_0$ represents a relatively small portion of the overall algorithm.
For the remainder of this section, we will assume that $R_0$ is a uniform preconditioner for $A_0$, i.e.\ that $\kappa(R_0 A_0)$ is bounded independent of mesh size $h$.

The spaces $V_1, \ldots, V_J$ are defined in terms of overlapping, unstructured \textit{vertex patches} (cf.~\cite{Pavarino1993,Dryja1990}).
These patches are defined in terms of sets of vertices of the original coarse mesh $\T_p$.
We begin by partitioning the vertices of $\T_p$ into $J$ disjoint sets, $E_1, \ldots, E_J$.
To each set of vertices $E_j$, we associate a subdomain $\Omega_j \subseteq \Omega$, which is obtained by taking the union of all coarse elements $D \in \T_p$ containing any vertex $\bm x_k \in E_j$.
In other words, $\Omega_j$ is defined by
\begin{equation} \label{eq:patch}
   \Omega_j = \bigcup_{\bm x_k \in E_j}
    \bigcup_{\substack{D \in \T_p \\ \bm x_k \in D}} D.
\end{equation}
Note that in general, the subdomains $\Omega_j$ overlap, since each element $D \in \T_p$ has $2^d$ vertices, each of which may belong to a different vertex set $E_j$.
The number of overlapping subdomains containing a single element is bounded by the number of vertices per element.
The subspace $V_j$, for $j \geq 1$ is defined as the multilinear finite element space on the mesh $\T_h$ restricted to the subdomain $\Omega_j$.
Homogeneous Dirichlet conditions are enforced at the subdomain boundary $\partial\Omega_j$.
The linear systems $A_j$ associated with each patch subspace $V_j$ may be quite large, and so the approximate solvers $R_j$ associated with each of these subspaces are given by a structured geometric multigrid V-cycle with ordered ILU smoothing.
These approximate solvers will be described and studied in greater detail in Section \ref{sec:multigrid}.
For now, we will assume that $R_j$ is a uniform preconditioner for $A_j$, i.e.\ that $\kappa(R_j A_j)$ is bounded independent of mesh size $h$ and polynomial degree $p$.

We are now interested in studying the speed of convergence of the preconditioned system $B A_h$, where the additive Schwarz preconditioner $B$ is defined by \eqref{eq:as}.
To estimate the condition number of the preconditioned system, we largely follow the domain decomposition theory developed in~\cite{Dryja1990,Toselli2005,Xu1992,Xu2002}.
Note that the condition number $\kappa(BA_h)$ is bounded by $c_1 / c_0$, where $c_1$ and $c_0$ are constants satisfying
\begin{equation}
   c_0 a(u_h, u_h) \leq a(BA_h u_h, u_h) \leq c_1 a(u_h, u_h).
\end{equation}
To begin, we first make use the assumption that each of the subspace solvers gives a uniformly well-conditioned system.
In other words, that there exist constants $C_0$ and $C_1$, independent of $h$ and $p$, such that (for all $0 \leq j \leq J$)
\begin{equation} \label{eq:approx-solvers}
   C_0 a(u_j, u_j) \leq a(R_j A_j u_j, u_j) \leq C_1 a(u_j, u_j)
      \quad\text{for all $u_j \in V_j$}.
\end{equation}
Using this assumption and making use of the identity \eqref{eq:identity}, we
have
\begin{align*}
   a(BA_h u_h, u_h) = \sum_{j=0}^J a(R_j Q_j A_h u_h, u_h)
                    = \sum_{j=0}^J a(R_j A_j P_j u_h, u_h)
                    = \sum_{j=0}^J a(R_j A_j P_j u_h, P_j u_h),
\end{align*}
and by \eqref{eq:approx-solvers} we see
\begin{equation}
    C_0 \sum_{j=0}^J a(P_j u_h, u_h)
      \leq a(B A_h u_h, u_h)
      \leq C_1 \sum_{j=0}^J a(P_j u_h, u_h).
\end{equation}
From this, we conclude that it suffices to find bounds $\tilde{c}_0$ and
$\tilde{c}_1$ such that
\begin{equation} \label{eq:c-tilde}
   \tilde{c}_0 a(u_h, u_h) \leq \sum_{j=0}^J a(P_j u_h, u_h)
      \leq \tilde{c}_1 a(u_h,u_h),
\end{equation}
from which we immediately have $c_0 \geq \tilde{c}_0 C_0$ and $c_1 \leq
\tilde{c}_1 C_1$.

For the upper bound, we make a standard argument using the finite overlap of the mesh patches.
Note that, since the projection operators $P_j$ have norm one,
\begin{equation} \label{eq:finite-overlap}
   \sum_{j=0}^J a(P_j u_h, u_h)
      = \sum_{j=0}^J a(P_j u_h, P_j u_h)
      = \sum_{j=0}^J a_j(P_j u_h, P_j u_h)
      \leq \sum_{j=0}^J a_j(u_h, u_h),
\end{equation}
Each element is contained in a bounded number of overlapping patches (bounded by the number of vertices per element, $2^d$).
Accounting for the course space $V_0$, we then have $\tilde{c}_1 \leq 2^d + 1$.

It remains to estimate the lower bound $\tilde{c}_0$, for which we recall Lions' lemma \cite{Lions1988}.
\begin{lem}[Lions' lemma]
Let $u_h \in V_h$ be written as an element of $V_0 + V_1 + \cdots V_J$, $u_h = \sum_{j=0}^J u_j$, where $u_j \in V_j$.
If
\begin{equation} \label{eq:decomp}
   \sum_{j=0}^J a(u_j, u_j) \leq \tilde{c}_0^{-1} a(u_h, u_h)
\end{equation}
then
\[
   \tilde{c}_0 a(u_h, u_h) \leq \sum_{j=0}^J a(P_j u_h, u_h).
\]
\end{lem}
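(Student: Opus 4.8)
The plan is to exploit the defining property of the elliptic projections $P_j$ together with two applications of the Cauchy--Schwarz inequality, in the now-standard fashion of the Lions / parallel subspace correction theory. The starting point is the given decomposition $u_h = \sum_{j=0}^J u_j$ with $u_j \in V_j$. First I would rewrite the energy of $u_h$ by expanding in the $a(\cdot,\cdot)$ inner product and testing against each component. Since $P_j u_h \in V_j$ is the elliptic projection defined in \eqref{eq:elliptic-proj}, we have $a(P_j u_h, v_j) = a(u_h, v_j)$ for every $v_j \in V_j$, and in particular for $v_j = u_j$. Therefore
\[
   a(u_h, u_h) = \sum_{j=0}^J a(u_h, u_j) = \sum_{j=0}^J a(P_j u_h, u_j).
\]

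Next I would bound the right-hand side by Cauchy--Schwarz. Applying the Cauchy--Schwarz inequality in the $a$-inner product to each summand, and then the discrete Cauchy--Schwarz inequality to the resulting sum over $j$, gives
\[
   \sum_{j=0}^J a(P_j u_h, u_j)
   \leq \left( \sum_{j=0}^J a(P_j u_h, P_j u_h) \right)^{1/2}
        \left( \sum_{j=0}^J a(u_j, u_j) \right)^{1/2}.
\]
At this point I would use the idempotence of the projection in the energy inner product: taking $v_j = P_j u_h$ in \eqref{eq:elliptic-proj} shows $a(P_j u_h, P_j u_h) = a(u_h, P_j u_h) = a(P_j u_h, u_h)$, so the first factor is exactly $\sum_{j=0}^J a(P_j u_h, u_h)$, which is precisely the quantity we wish to bound from below. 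The second factor is controlled by the stable-decomposition hypothesis \eqref{eq:decomp}, which bounds $\sum_{j=0}^J a(u_j, u_j)$ by $\tilde{c}_0^{-1} a(u_h, u_h)$.

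Combining the three displays, I obtain
\[
   a(u_h, u_h) \leq \left( \sum_{j=0}^J a(P_j u_h, u_h) \right)^{1/2}
                    \left( \tilde{c}_0^{-1} a(u_h, u_h) \right)^{1/2}.
\]
Assuming $u_h \neq 0$ (the claim is trivial otherwise), I would divide through by $a(u_h,u_h)^{1/2}$ and square, which yields $\tilde{c}_0 \, a(u_h, u_h) \leq \sum_{j=0}^J a(P_j u_h, u_h)$, the desired lower bound. I do not expect a serious obstacle here: the argument is entirely self-contained once the two elliptic-projection identities are in hand, and the only point requiring care is the bookkeeping of the two Cauchy--Schwarz steps and the recognition that $\sum_j a(P_j u_h, P_j u_h)$ and $\sum_j a(P_j u_h, u_h)$ coincide. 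The essential structural input is not an analytic estimate but the hypothesis \eqref{eq:decomp} itself, which the lemma simply converts into energy control; all the genuine difficulty in \emph{applying} the lemma is therefore deferred to the separate task of exhibiting a stable decomposition with a $p$- and $h$-independent constant $\tilde{c}_0$.
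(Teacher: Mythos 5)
Your proof is correct. Note, however, that the paper itself does not prove this lemma at all: it is quoted as a known result, with a citation to Lions' 1988 work, and its proof is deferred entirely to the literature. Your argument --- the projection identity $a(P_j u_h, u_j) = a(u_h, u_j)$, the idempotence $a(P_j u_h, P_j u_h) = a(P_j u_h, u_h)$, and the two Cauchy--Schwarz applications (one in the $a$-inner product, one discrete over $j$) --- is exactly the standard proof found in the parallel subspace correction literature (e.g.\ Toselli--Widlund, Xu), and every step checks out, including the reduction to the nontrivial case $u_h \neq 0$. Your closing observation is also the right one: the lemma itself is elementary, and the entire analytic burden in the paper lies in constructing the stable decomposition, which is the content of Lemma \ref{lem:uj} and the partition-of-unity argument, not of this lemma.
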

By means of this lemma, it suffices to demonstrate a stable decomposition $u_h = \sum_{j=0}^J u_j$ for $u_h \in V_h$ satisfying inequality \eqref{eq:decomp}.
On the coarse space, we use the $L^2$ projection to define $u_0 = Q_0 u_h$.
Making use of the results from Bramble and Xu \cite{Bramble1991}, we have the following properties:
\begin{equation} \label{eq:l2-properties}
   a(Q_0 u_h, Q_0 u_h) \lesssim a(u_h, u_h), \quad\text{and}\quad
   \| u_h - Q_0 u_h \|^2_{L^2} \lesssim h^2 a(u_h, u_h).
\end{equation}
Now it remains to write $w = \sum_{j=1}^J u_j$, where $w = u_h - u_0$.

We begin by defining a partition of unity $\{\theta_j\}$ subordinate to the patches $\Omega_j$.
Each function of the partition of unity will be a member of the low-order coarse finite element space $V_0$.
Recall that the space $V_0$ is given by the $p=1$ finite element space defined on the coarse mesh $\T_p$, and therefore $\theta_j : \Omega \to \mathbb{R}$ is completely determined by its values at the coarse mesh vertices $\bm x_k$.
Additionally, recall that the subdomains $\Omega_j$ are defined in terms of sets of vertices $E_j$.
Then, let $\theta_j \in V_0$ be defined by
\begin{equation} \label{eq:partn-unity}
   \theta_j(\bm x_k) = \begin{cases}
      1 & \quad\text{if $\bm x_k \in E_j$},\\
      0 & \quad\text{if $\bm x_k \notin E_j$}.
   \end{cases}
\end{equation}
It is clear from this definition that $\mathrm{supp}(\theta_j) \subseteq \Omega_j$.
Since each coarse mesh vertex $\bm x_k$ is included in exactly one vertex set $E_j$, we have that $\sum_{j=1}^J \theta_j(\bm x) = 1$ for all $\bm x \in \Omega$.
Letting $w_j = \theta_j w$, we have $\sum_{j=1}^J w_j = w$.
However, note that in general, $w_j \notin V_j$, since $w_j$ may include quadratic terms.
Therefore, define $u_j = I_h(w_j) \in V_j$, where $I_h$ denotes multilinear interpolation on the refined mesh $\T_h$.
By linearity of $I_h$, the functions $u_j$ also satisfy $\sum_{j=1}^J u_j = w$ and thus $u_0 + \sum_{j=1}^J u_j = u_h$.

We make the following claim concerning the $H^1$ seminorm of $u_j$.
This lemma is the key technical result allowing us to prove uniformity of the preconditioner with respect to $p$ as well as $h$.
We thank D.\ Kalchev for his contributions to the proof of this lemma.
\begin{lem} \label{lem:uj}
   Let $w \in V_h$ be given, and let $w_j = \theta_j w$, where $\theta_j$ is
   defined by \eqref{eq:partn-unity}. Let $u_j = I_h(w_j)$. Then,
   \[
      | u_j |_{H^1(\Omega_j)}^2
      \lesssim h^{-2} \| w \|^2_{L^2(\Omega_j)}
                          + | w |^2_{H^1(\Omega_j)}.
   \]
\end{lem}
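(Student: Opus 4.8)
The plan is to split $u_j = I_h(\theta_j w)$ through the triangle inequality as
\[
   |u_j|_{H^1(\Omega_j)}
   \leq |\theta_j w|_{H^1(\Omega_j)}
      + |\theta_j w - I_h(\theta_j w)|_{H^1(\Omega_j)},
\]
and bound each term separately by $h^{-2}\|w\|^2_{L^2(\Omega_j)} + |w|^2_{H^1(\Omega_j)}$. The first term is elementary. By the product rule $\nabla(\theta_j w) = w\,\nabla\theta_j + \theta_j\,\nabla w$, and since $\theta_j \in V_0$ is multilinear on the \emph{coarse} mesh $\T_p$, we have $|\theta_j| \leq 1$ and $|\nabla\theta_j| \lesssim h^{-1}$, which gives $|\theta_j w|^2_{H^1(\Omega_j)} \lesssim h^{-2}\|w\|^2_{L^2(\Omega_j)} + |w|^2_{H^1(\Omega_j)}$ at once. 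The crux is therefore the interpolation-error term, and the difficulty is precisely that $\T_h$ is not shape regular in $p$: a naive isotropic interpolation estimate contributes a constant growing like the cell aspect ratio, which scales like $p$.

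To control the interpolation error I would work cell by cell on $\T_h$ and replace the isotropic estimate by an \emph{anisotropic} one. Writing $g = \theta_j w$ and considering a refined cell $K \in \T_h$ with (reference) edge lengths $h^K_1, \dots, h^K_d$, the tensor-product structure of multilinear interpolation (obtained by composing the one-dimensional estimates $\|(\phi - I_h\phi)'\|_{L^2} \lesssim h\,\|\phi''\|_{L^2}$ direction by direction) yields, for each $l$,
\[
   \| \partial_l(g - I_h g) \|_{L^2(K)}
      \lesssim \sum_{i=1}^d h^K_i \, \| \partial_l \partial_i g \|_{L^2(K)}.
\]
This is the key estimate: it pairs each directional mesh size $h^K_i$ with the corresponding mixed second derivative, rather than multiplying one worst-case diameter against the full Hessian. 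The geometric fact to exploit alongside it is that each refined edge length is bounded by the coarse element size, $h^K_i \lesssim h$.

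I would then expand the second derivatives of $g = \theta_j w$ by the Leibniz rule and estimate term by term, using that both $\theta_j$ and $w$ are multilinear on $K$ (the former because $K$ lies inside a single coarse element of $\T_p$, the latter because $w \in V_h$). Multilinearity annihilates the pure second derivatives of each factor, so $\partial_l^2 g = 2\,\partial_l\theta_j\,\partial_l w$, which together with $|\partial_l\theta_j| \lesssim h^{-1}$ and $h^K_l \lesssim h$ contributes only $\lesssim |w|^2_{H^1(K)}$. For the mixed terms ($i \neq l$), the expansion of $\partial_i\partial_l g$ has four pieces: the piece $(\partial_i\partial_l\theta_j)\,w$, bounded via $|\partial_i\partial_l\theta_j| \lesssim h^{-2}$ and $h^K_i \lesssim h$, produces the $h^{-2}\|w\|^2_{L^2(K)}$ term; the two pieces $(\partial_i\theta_j)(\partial_l w)$ and $(\partial_l\theta_j)(\partial_i w)$ are absorbed into $|w|^2_{H^1(K)}$ as before; and the remaining piece $\theta_j\,\partial_i\partial_l w$, which does \emph{not} vanish since $w$ is only multilinear, is handled by the degree-one inverse inequality $h^K_i\,\|\partial_i\partial_l w\|_{L^2(K)} \lesssim \|\partial_l w\|_{L^2(K)}$, whose constant is $p$-independent because the polynomial degree on $\T_h$ is fixed at one. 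Summing over all $K \subseteq \Omega_j$ gives the interpolation-error bound and hence the lemma.

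I expect the main obstacle to be the anisotropic estimate and its application: establishing the directionally split bound and recognizing that the dangerous term $\theta_j\,\partial_i\partial_l w$ is tamed not by any property of the partition of unity but by the fixed-degree inverse inequality for $w$, while the $h^{-2}\|w\|_{L^2}^2$ loss comes solely from $\partial_i\partial_l\theta_j$ on the coarse scale. The affine, non-axis-aligned case is then handled by pulling each cell back to the reference box, where these arguments apply verbatim, and absorbing the Jacobian of the coarse-element map into the constants; since $\T_p$ is fixed independently of $p$, this does not affect uniformity in $p$.
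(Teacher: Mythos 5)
Your proof is correct, but the core mechanism differs from the paper's. Both arguments share the first step — the product-rule bound $|\theta_j w|^2_{H^1(\Omega_j)} \lesssim h^{-2}\|w\|^2_{L^2(\Omega_j)} + |w|^2_{H^1(\Omega_j)}$ using $0\le\theta_j\le 1$ and $|\nabla\theta_j|\lesssim h^{-1}$ — and both reduce to axis-aligned boxes by pulling back through the coarse-element map. The divergence is in how the interpolant is handled. The paper never forms the interpolation error: it proves aspect-ratio-uniform \emph{stability}, $|I_h(w_j)|_{H^1(D)}\le c_I\,|w_j|_{H^1(D)}$, by splitting the $H^1$ seminorm into directional pieces $|\cdot|_{H^1_i}$, noting that the anisotropic scaling factor $\bigl(\prod_j h_j\bigr)/h_i^2$ appears identically on both sides direction by direction (so it cancels), and invoking boundedness of $I_h:\Q_2([0,1]^d)\to\Q_1([0,1]^d)$ in each directional seminorm by finite-dimensionality on the reference cube. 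You instead pass through the triangle inequality and bound the \emph{error} $|w_j - I_h(w_j)|_{H^1}$ via an anisotropic interpolation estimate, then expand the Hessian of $\theta_j w$ by Leibniz and absorb the non-vanishing term $\theta_j\,\partial_i\partial_l w$ with a directional inverse inequality whose constant is aspect-ratio independent. Both routes rest on the same principle — directionally matched scalings yield $p$-independent constants — but yours needs more machinery: second-derivative bookkeeping, the anisotropic error estimate, and the inverse inequality. One caution: anisotropic error estimates for nodal interpolation are delicate for general $H^2$ functions in three dimensions (this is a known failure mode), so your key estimate should be justified as you apply it, namely to $g=\theta_j w\in\Q_2(K)$, where a reference-cell scaling plus finite-dimensionality argument makes it rigorous — the same finite-dimensionality the paper uses, just deployed for an error bound rather than for stability. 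What your version buys is an explicit error estimate and a precise accounting of where the $h^{-2}\|w\|^2_{L^2}$ loss originates (solely from $\partial_i\partial_l\theta_j$); what the paper's buys is brevity, avoiding second derivatives and interpolation-error theory altogether.
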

\begin{rem}
The main issue that must be addressed in this lemma is that the mesh $\T_h$ is not shape-regular with respect to $p$, even though we assume that the original coarse mesh $\T_p$ is shape regular.
The aspect ratio of elements in $\T_h$ increases linearly with the polynomial degree $p$ of the high-order space $V_p$.
For this reason, standard inverse inequalities (such as those used in e.g.\ \cite{Dryja1990}) are insufficient to prove this claim.
\end{rem}
\begin{proof}
We first claim that
\begin{equation} \label{eq:wi-bound}
   | w_j |_{H^1(\Omega_j)}^2 \lesssim h^{-2} \| w \|_{L^2(\Omega_j)}^2
      + | w |_{H^1(\Omega_j)}^2 .
\end{equation}
We work on a single element in the refined mesh $D \in \T_h$.
On $D$, both $w$ and $\theta_j$ are bilinear functions (and hence smooth).
So, a simple application of the product rule gives
\begin{align*}
   | \nabla(\theta_j w) |^2
      = | (\nabla\theta_j) w + \theta_j \nabla w |^2
      \leq 2 | (\nabla\theta_j) w |^2 + 2 | \theta_j \nabla w |^2.
\end{align*}
Given the definition of $\theta_j$, we have the pointwise
\[
   0 \leq \theta_j \leq 1, \qquad | \nabla \theta_j | \lesssim 1/h.
\]
Making use of these bounds, we obtain
\begin{align*}
   | w_j |_{H^1(D)}^2 \lesssim  h^{-2} \| w \|_{L^2(D)}^2
      + | w |_{H^1(D)}^2 .
\end{align*}
Thus, summing over all elements $D \subseteq \Omega_j$, we arrive at \eqref{eq:wi-bound}.

Now, it remains to show that $| I_h(w_j) |_{H^1(D)}^2 \lesssim | w_j |_{H^1(D)}^2$, where the constant is independent of the aspect ratio of the element $D$.
It suffices to consider the case that $D$ is the rectangle $[0,h_1] \times\cdots \times[0,h_d]$.
This simplification can be performed because the coarse mesh element containing $D$ can be mapped to the reference element $[0,1]^d$, and the resulting $H^1$ seminorms will be equivalent up to a constant depending on the regularity of the coarse mesh $\T_p$.

We proceed dimension-by-dimension.
Define the $i$th-directional seminorm $|\cdot|_{H^1_i(D)}$ by
\[
   |u|_{H^1_i(D)}^2 =
      \int_D \left|\frac{\partial u}{\partial x_i}\right|^2\,d\bm x.
\]
From this definition, it follows that
\[
   | u |_{H^1(D)}^2 = \sum_{i=1}^d | u |_{H^1_i(D)}^2.
\]
Define $\hat{u}_j$ on the reference element $[0,1]^d$ by $\hat{u}_j(\bm x) = u_j(\bm h \bm x)$, where $\bm h \bm x = (h_1 x_1, \ldots, h_d, x_d)$.
Then, a simple change of variables shows that
\[
   | u_j |_{H^1_i(K)}^2 = \frac{\prod_{j=1}^d h_j}{h_i^2}
      | \hat{u}_j |_{H^1_i([0,1]^d)}^2.
\]
Note that since $u_j = I_h(w_j)$, we have $\hat{u}_j = \widehat{I_h(w_j)} = I_h(\hat{w}_j)$.
Furthermore, let $\Q_2([0,1]^d)$ be the space of polynomials on the unit cube that are at most quadratic in each variable, and let $\Q_1([0,1]^d)$ be the space of multilinear polynomials on the unit cube.
Then, since this space is finite-dimensional, it is clear that $I_h : \Q_2([0,1]^d) \to \Q_1([0,1]^d)$ is a bounded linear operator with the respect to the $|\cdot|_{H^1_i(D)}$ seminorms.
In other words, there exists some constant $c_I$ such that, for $1 \leq i \leq d$,
\begin{equation} \label{eq:interp-bound}
   | I_h(\hat{w}_j) |^2_{H^1_i([0,1]^d)}
      \leq c_I | \hat{w}_j |^2_{H^1_i([0,1]^d)}.
\end{equation}
Therefore, we have
\[
   | u_j |_{H^1(D)}^2
       = \sum_{i=1}^d | u_j |_{H^1_i(D)}^2
       = \sum_{i=1}^d \frac{\prod_{j=1}^d h_j}{h_i^2}
         |\hat{u}_j|_{H^1_i([0,1]^d)}^2
       = \sum_{i=1}^d \frac{\prod_{j=1}^d h_j}{h_i^2}
         | I_h(\hat{w}_j) |_{H^1_i([0,1]^d)}^2.
\]
Making use of the inequality \eqref{eq:interp-bound}, this gives
\begin{equation} \label{eq:aniso-interp-bound}
   | u_j |_{H^1(D)}^2
      \leq c_I \sum_{i=1}^d \frac{\prod_{j=1}^d h_j}{h_i^2}
                 | \hat{w}_j |_{H^1_i([0,1]^d)}^2
      = c_I \sum_{i=1}^d | w_j |_{H^1_i(D)}^2
      = c_I | w_j |_{H^1(D)}^2,
\end{equation}
where it is important to note that the constant $c_I$ is independent of the aspect ratio of the rectangle $[0,h_1] \times \cdots \times [0,h_d]$, and hence independent of the high-order degree $p$.
Summing over all elements $D \subseteq \Omega_j, D \in \T_h$, we combine \eqref{eq:aniso-interp-bound} with \eqref{eq:wi-bound} to conclude
\[
   | u_j |_{H^1(\Omega_j)}^2
   \lesssim h^{-2} \| w \|^2_{L^2(\Omega_j)}
                       + | w |^2_{H^1(\Omega_j)}. \qedhere
\]
\end{proof}

\begin{rem} \label{rem:anisotropy}
Note that the above lemma additionally applies to mesh patches $\Omega_j$ featuring anisotropy from sources other than Gauss-Lobatto refinement, suggesting that this additive Schwarz technique could prove to be useful when applied to problems with boundary layers or other more general anisotropic features.
An example of such a problem is considered in Section \ref{sec:aniso}.
\end{rem}

We are now ready to state the main result concerning the additive Schwarz preconditioner applied to the low-order refined system.
\begin{thm} \label{thm:as}
Define the preconditioner $B$ by \eqref{eq:as}, where each local solver $R_j$ is an approximation to $A_j^{-1}$ that is uniform in $h$ and $p$.
Then, there exists a constant $\tilde{C}$, independent of $h$ and $p$, such that
\[
   \kappa(BA_h) \leq \tilde{C}.
\]
\end{thm}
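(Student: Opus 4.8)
The plan is to establish the two-sided bound \eqref{eq:c-tilde} and then feed it into the reduction already carried out in the text, $\kappa(BA_h) \leq c_1/c_0 \leq (\tilde{c}_1 C_1)/(\tilde{c}_0 C_0)$. The upper bound $\tilde{c}_1 \leq 2^d+1$ is already in hand from the finite-overlap estimate \eqref{eq:finite-overlap}, so the entire task reduces to producing a lower bound $\tilde{c}_0$ uniform in $h$ and $p$. By Lions' lemma it suffices to verify that the decomposition fixed above — $u_0 = Q_0 u_h$ on the coarse space and $u_j = I_h(\theta_j w)$ with $w = u_h - u_0$ on the vertex patches — satisfies \eqref{eq:decomp}, i.e.\ that $\sum_{j=0}^J a(u_j,u_j) \lesssim a(u_h,u_h)$.

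To bound the sum, I would first dispatch the coarse contribution using the $H^1$ stability of the $L^2$ projection in \eqref{eq:l2-properties}: $a(u_0,u_0) = a(Q_0 u_h, Q_0 u_h) \lesssim a(u_h, u_h)$. For the patches, Lemma \ref{lem:uj} gives $a(u_j,u_j) = |u_j|^2_{H^1(\Omega_j)} \lesssim h^{-2}\|w\|^2_{L^2(\Omega_j)} + |w|^2_{H^1(\Omega_j)}$ with a constant free of $p$ — this is the only place the anisotropy of $\T_h$ enters. Summing over $j$ and invoking finite overlap (each point lies in at most $2^d$ subdomains), the local contributions collapse to $h^{-2}\|w\|^2_{L^2(\Omega)} + |w|^2_{H^1(\Omega)}$ up to the factor $2^d$. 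It then remains to bound these global quantities by $a(u_h,u_h)$: the approximation estimate $\|w\|^2_{L^2} \lesssim h^2 a(u_h,u_h)$ from \eqref{eq:l2-properties} absorbs the $h^{-2}$ factor, while a triangle inequality with the $H^1$ stability of $Q_0$ gives $|w|^2_{H^1} \lesssim a(u_h,u_h)$. Collecting terms yields \eqref{eq:decomp} with $\tilde{c}_0^{-1}$ uniformly bounded.

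Assembling the pieces, $\tilde{c}_0 \gtrsim 1$ and $\tilde{c}_1 \leq 2^d+1$ together with the local-solver hypothesis \eqref{eq:approx-solvers} give $c_0 \geq \tilde{c}_0 C_0$ and $c_1 \leq \tilde{c}_1 C_1$, whence $\kappa(BA_h) \leq \tilde{C}$ independent of $h$ and $p$. The main obstacle does not appear in this assembly, which is standard additive Schwarz bookkeeping; it was confronted in Lemma \ref{lem:uj}, where the loss of shape regularity in $p$ rules out the usual inverse inequality and forces the use of the aspect-ratio-independent interpolation bound \eqref{eq:interp-bound}. Given that lemma, the one point that still demands care here is checking that the $h^{-2}$ produced by the partition-of-unity gradient is exactly matched by the $h^2$ in the $L^2$ approximation property of $Q_0$, so that no residual dependence on the mesh scale survives in $\tilde{c}_0$.
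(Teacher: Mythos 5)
Your proof is correct and follows essentially the same route as the paper's: the finite-overlap bound $\tilde{c}_1 \leq 2^d+1$, Lions' lemma applied to the decomposition $u_0 = Q_0 u_h$, $u_j = I_h(\theta_j w)$, the coarse-space properties \eqref{eq:l2-properties}, and Lemma \ref{lem:uj} to control the patch contributions. If anything, you make one step slightly more explicit than the paper does — the triangle-inequality bound $|w|^2_{H^1} \lesssim a(u_h,u_h)$ and the exact cancellation of $h^{-2}$ against the $h^2$ approximation property — which the paper compresses into a single chain of inequalities.
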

\begin{proof}
The constant $\tilde{C}$ is bounded above by $\tilde{c}_1/\tilde{c}_0$ from \eqref{eq:c-tilde}.
Using the finite overlap of the mesh patches, the estimate given by \eqref{eq:finite-overlap} implies that $\tilde{c}_1 \leq 2^d + 1$.

To estimate the lower-bound $\tilde{c_0}$ we use Lions' lemma and the decomposition $u_h = \sum_{j=0}^J u_j$ described above.
Property \eqref{eq:l2-properties} implies that $a(u_0, u_0) \lesssim a(u_h, u_h)$ and $\| w \|^2_{L^2} \lesssim h^{2} a(u_h,u_h)$.
Combining these estimates with the result of Lemma \ref{lem:uj}, we have
\begin{align*}
   \sum_{j=0}^J a(u_j, u_j)
      &\lesssim a(u_h, u_h) + \sum_{j=1}^J a(u_j, u_j) \\
      &\lesssim a(u_h, u_h) + \sum_{j=1}^J \left(
         h^{-2} \| w \|_{L^2(\Omega_j)}^2 + |w|^2_{H^1(\Omega_j)}
      \right) \\
      &\lesssim a(u_h, u_h) + \sum_{j=1}^J | u_h |^2_{H^1(\Omega_j)} \\
      &\lesssim (2^d + 1) a(u_h, u_h).
\end{align*}
Thus, $\tilde{c}_0 \gtrsim 1 / (2^d+1)$, completing the proof.
\end{proof}
Combining the estimates from Theorem \ref{thm:as} and Proposition \ref{prop:fem-sem}, this result extends to the high-order system \eqref{eq:system}.
\begin{cor} \label{cor:as}
   Let $B$ be the preconditioner defined by \eqref{eq:as}.
   Then, there exists a constant $C$, independent of $h$ and $p$, such that
   \[
      \kappa(B A_p) \leq C.
   \]
\end{cor}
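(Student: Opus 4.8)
The plan is to transfer the condition-number bound of Theorem \ref{thm:as} from the low-order operator $A_h$ to the high-order operator $A_p$ by transitivity of spectral equivalence, using Proposition \ref{prop:fem-sem} as the bridge. The essential observation is that the high-order space $V_p$ and the low-order refined space $V_h$ share the same Gauss--Lobatto nodal coefficient space, so that $K_p$, $K_h$, and the symmetric additive Schwarz operator $B$ of \eqref{eq:as} may all be regarded as symmetric positive-definite objects on a common vector space, and $B$ is literally one and the same preconditioner for both $A_h$ and $A_p$.

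First I would restate Theorem \ref{thm:as} in two-sided Rayleigh-quotient form. Since $B$ is symmetric positive definite, $B A_h$ is similar to the symmetric positive-definite operator $B^{1/2} K_h B^{1/2}$, whose eigenvalues coincide with those of $B A_h$ and therefore lie in an interval $[\lambda_{\min},\lambda_{\max}]$ with $\lambda_{\max}/\lambda_{\min} = \kappa(B A_h) \leq \tilde{C}$. Substituting $\bm v = B^{1/2}\bm w$ into the min--max characterization yields
\[
   \lambda_{\min}\, \bm v^T B^{-1} \bm v \leq \bm v^T K_h \bm v
      \leq \lambda_{\max}\, \bm v^T B^{-1} \bm v \qquad \text{for all } \bm v,
\]
i.e.\ $B^{-1}$ and $K_h$ are spectrally equivalent with ratio at most $\tilde C$.

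Next I would insert Proposition \ref{prop:fem-sem}, which supplies constants (call them $c$ and $C'$, to avoid clashing with the $C$ of the statement) independent of $h$ and $p$ satisfying $c\, \bm v^T K_h \bm v \leq \bm v^T K_p \bm v \leq C'\, \bm v^T K_h \bm v$. Chaining this with the displayed bound sandwiches $\bm v^T K_p \bm v$ between $c\lambda_{\min}\,\bm v^T B^{-1}\bm v$ and $C'\lambda_{\max}\,\bm v^T B^{-1}\bm v$; equivalently, the eigenvalues of $B^{1/2} K_p B^{1/2}$, and hence of $B A_p$, lie in $[c\lambda_{\min},\, C'\lambda_{\max}]$. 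Taking the ratio of the extreme eigenvalues gives
\[
   \kappa(B A_p) \leq \frac{C'\lambda_{\max}}{c\,\lambda_{\min}}
      \leq \frac{C'}{c}\,\tilde{C} =: C,
\]
which is independent of $h$ and $p$ because $c, C'$ come from Proposition \ref{prop:fem-sem} and $\tilde C$ from Theorem \ref{thm:as}.

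Since the argument is merely a transitivity property of spectral equivalence, no delicate estimate is involved; the only point requiring care is the symmetry and positivity bookkeeping. Concretely, I must confirm that $B$ (equivalently, each local solver $R_j$) is symmetric positive definite in the inner product for which $B A_h$ is self-adjoint, so that the iterative condition number genuinely equals the ratio of extreme eigenvalues and the similarity transforms $B^{1/2}(\cdot)B^{1/2}$ are legitimate. Once this setup is in place---as it is for the symmetric additive Schwarz operator of \eqref{eq:as}---the corollary follows immediately. This bookkeeping is the main, and essentially the only, obstacle, and it is a matter of setup rather than of genuine difficulty.
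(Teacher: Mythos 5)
Your proposal is correct and follows exactly the paper's route: the paper proves this corollary in a single sentence by combining Theorem \ref{thm:as} with Proposition \ref{prop:fem-sem}, which is precisely the transitivity-of-spectral-equivalence argument you carry out in detail. Your additional bookkeeping (the Gauss--Lobatto identification of coefficient vectors and the symmetry of $B$ needed for the Rayleigh-quotient characterization) is exactly what the paper leaves implicit.
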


\begin{rem}
The components described above can also be used to construct a simple two-grid scheme, using $V_1 + \cdots + V_J$ as a smoother and $V_0$ as a coarse solver.
Furthermore, the multiplicative version of the Schwarz solver is easily constructed.
In the remainder of this paper, we consider only the additive version of the algorithm for reasons of simplicity and ease of parallelization.
\end{rem}

\subsection{Element-structured multigrid V-cycle} \label{sec:multigrid}

In order to precondition the local patch operators $A_j$, $1 \leq j \leq J$, we introduce an element-structured multigrid V-cycle.
This V-cycle is used as a local approximate solver in an additive Schwarz framework, which in turn is used as a preconditioner in a Krylov subspace method.
Recall that each element $D \subseteq \Omega_j, D \in \T_p$ from the original high-order mesh is refined by meshing the Gauss-Lobatto nodes, resulting in a (non-uniform) Cartesian grid of sub-elements on each coarse element.
This gives rise to a natural geometric hierarchy of meshes, which can be obtained from the refined mesh by coarsening the Cartesian subgrid within each coarse element.

Each level in the hierarchy is constructed from the previous level by removing half of the interior Gauss-Lobatto points within each element, resulting in $\mathcal{O}(\log(p))$ levels.
For instance, this can be achieved by deleting every other interior one-dimensional Gauss-Lobatto point (proceeding, for example, left-to-right) within the coarse element $D$.
This process terminates when all interior Gauss-Lobatto points have been removed.
At this point, the coarsened elements coincide with the elements from the original mesh $\T_p$.
This multigrid procedure bears some resemblance to $p$-multigrid methods, with $p=1$ as the coarsest level \cite{Helenbrook2003,Helenbrook2006,Fidkowski2005}.
The key difference is the current method operates only on low-order refined operators, avoiding the prohibitive cost of assembling the stiffness matrices associated with high-order operators.
At each level of the multigrid hierarchy, we perform one pre-smoothing and one post-smoothing step.
The construction of appropriate smoothers is discussed in Section \ref{sec:ilu}.
Depending on the size of the mesh patch, at the coarsest level we can use either a direct solver, or a less expensive approximate solver such as ILU.

\subsection{Ordered ILU smoothers} \label{sec:ilu}

\begin{figure} \centering
   \includegraphics[scale=0.82]{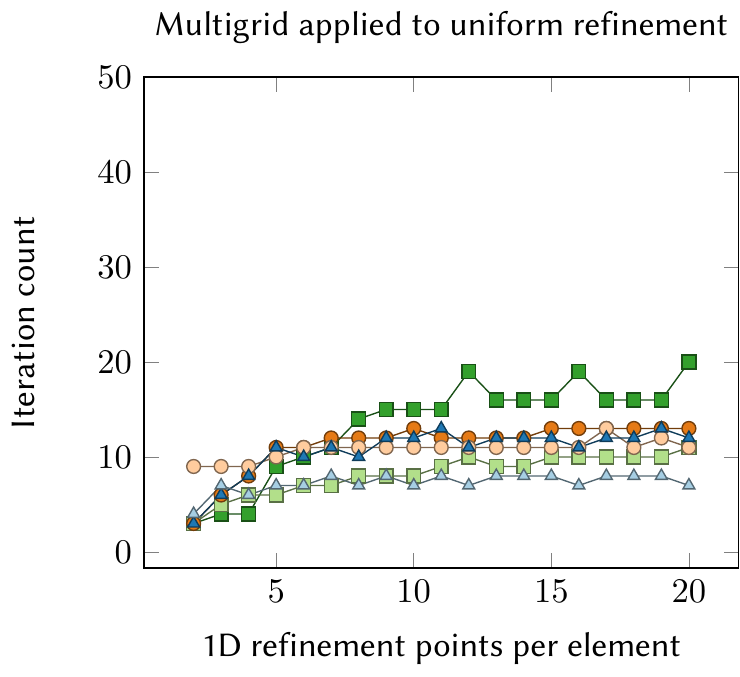}%
   \includegraphics[scale=0.82]{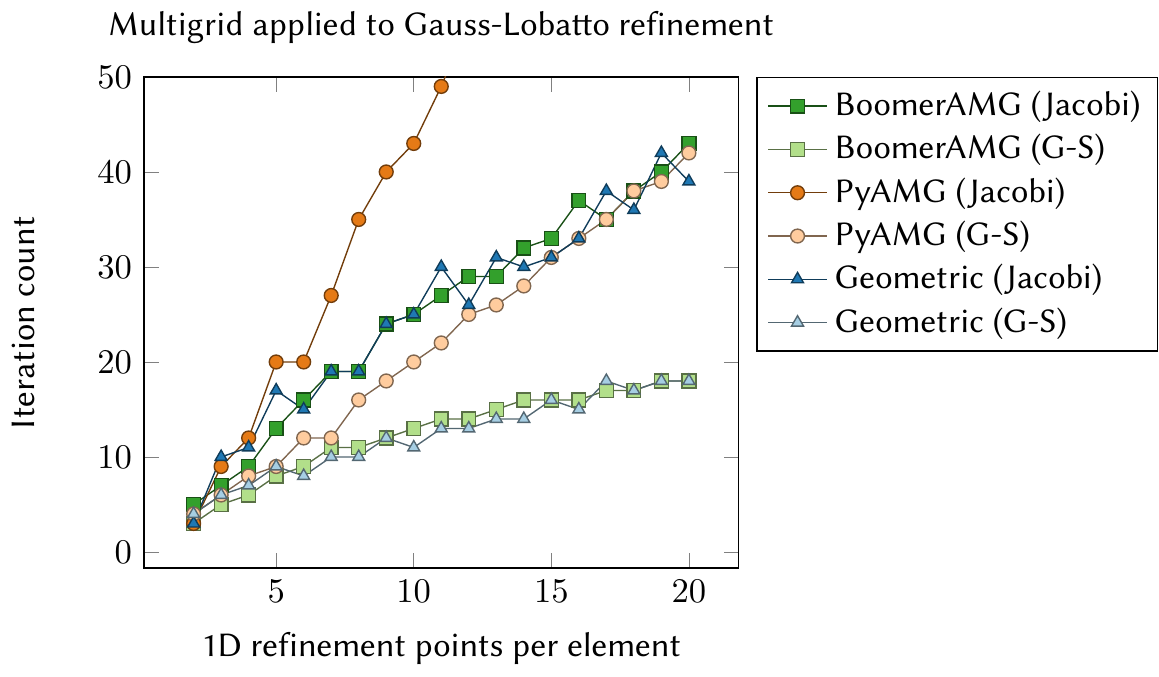}
   \caption{Multigrid-preconditioned conjugate gradient iteration counts (residual reduction of $10^{12}$) for a Poisson problem on a Cartesian grid. The grids are obtained by refining a $2\times2$ Cartesian grid using either uniform or Gauss-Lobatto refinement.}
   \label{fig:mg-iters}
\end{figure}

\begin{figure} \centering
   \includegraphics[scale=0.82]{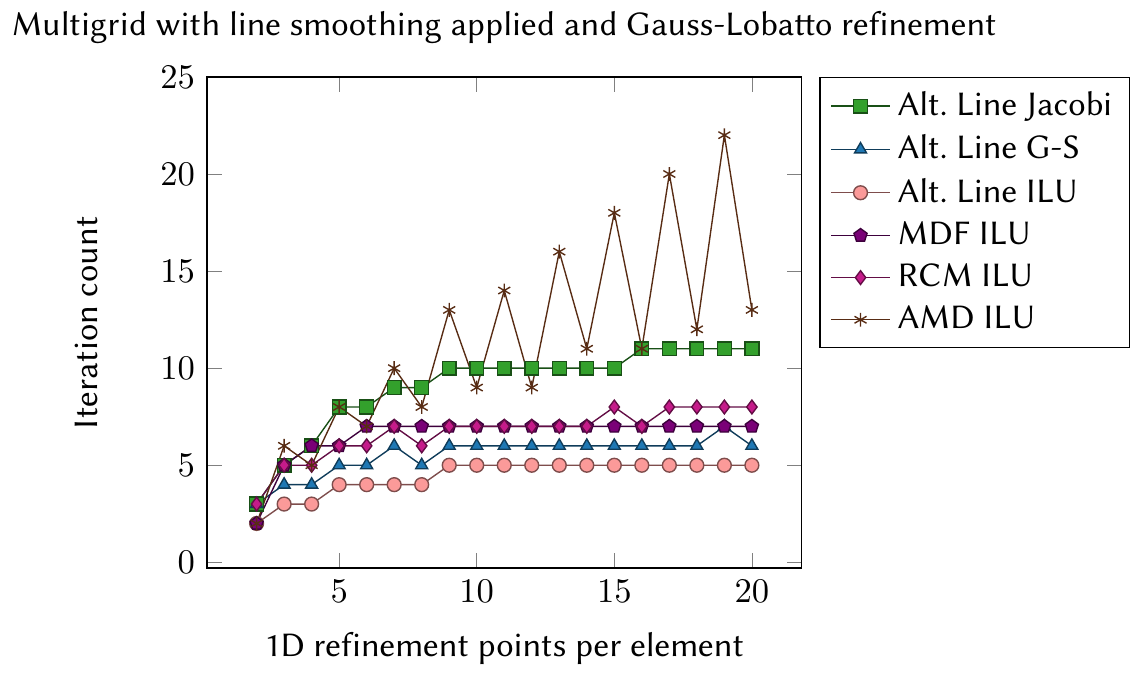}%
   \caption{Line and ILU smoothing: multigrid-preconditioned conjugate gradient iteration counts (residual reduction of $10^{12}$) for a Poisson problem on a Cartesian grid. The grids are obtained by refining a $2\times2$ Cartesian grid using Gauss-Lobatto refinement.}

   \label{fig:line-iters}
\end{figure}

As mentioned previously, the refined mesh $\T_h$ is highly anisotropic.
It is well-known that the performance of standard smoothers such as Jacobi and Gauss-Seidel depend on the anisotropy of the mesh, and thus degrade as the polynomial degree $p$ of the high-order system is increased.
To illustrate this, we apply a variety of algebraic and geometric multigrid methods to the Poisson problem on a $2\times2$ coarse mesh.
Each coarse element is subdivided using a regular grid defined by a given number of refinement points in each direction.
These points are either distributed uniformly or according to the Gauss-Lobatto rule.
In Figure \ref{fig:mg-iters}, we present the number of CG iterations required to reduce the residual by a factor of $10^{12}$ while increasing the number of refinement points per direction in each element.
We apply the commonly used BoomerAMG \cite{Henson2002} and PyAMG \cite{Olson2018} algebraic multigrid preconditioners, as well as a standard geometric multigrid V-cycle based on block-structured mesh coarsening.
For each of these multigrid methods, we consider both Jacobian and Gauss-Seidel smoothing.
When the grid is refined uniformly, geometric and algebraic multigrid method converge uniformly using the simple point Jacobi smoother.
However, when the grid is refined anisotropically using Gauss-Lobatto points, the iteration counts quickly grow.
Using a more powerful smoother such as Gauss-Seidel can improve performance, however the iteration counts are no longer robust in the refinement level.

A common method for addressing this difficulty is line smoothing \cite{Wesseling1995,Shen2000}.
On general, unstructured meshes, properly identifying nonintersecting lines of degrees of freedom along which to perform the smoothing is no longer trivial \cite{Hassan1993}.
To avoid this difficulty, we make use of incomplete LU (ILU) smoothing.
Different orderings of the degrees of freedom result in different ILU factorizations.
The choice of ordering is known to have a large effect on the convergence of ILU-preconditioned conjugate gradient methods \cite{DAzevedo1992}.
To make the connection between ILU smoothing and alternating line smoothing, we see that if the ordering is chosen such that the degrees of freedom on each line appear consecutively, then alternating applications of the ILU factorization can be used to perform the alternating line smoothing \cite{Heinrichs1988}.
We refer to this smoother as ``alternating line-ordered ILU.'' This procedure still requires the identification of lines corresponding to strong coupling.
We are interested in ordering the degrees of freedom in the context of unstructured meshes, and thus turn to algebraic ordering methods.
In particular, we consider approximate minimum degree (AMD) \cite{Amestoy1996}, reverse Cuthill-McKee (RCM) \cite{Cuthill1969}, and minimum discarded fill (MDF) \cite{DAzevedo1992,Persson2008} orderings.

We apply alternating line Jacobi, alternating line Gauss-Seidel, and ordered ILU smoothing to the model problem with Gauss-Lobatto refinement.
In Figure \ref{fig:line-iters} we show the number of CG iterations required to reduce the residual by a factor of $10^{12}$.
We first note that the alternating line relaxation methods all result in iteration counts that remain constant, independent of the refinement level.
Line Gauss-Seidel smoothing results in iteration counts that are about one-half of those obtained using line Jacobi smoothing.
Alternating line-ordered ILU results in iteration counts slightly less than alternating line Gauss-Seidel.
MDF-ordered and RCM-ordered ILU give comparable iteration counts, and both remain robust in the number of refinements.
AMD-ordered ILU results in iteration counts that exhibit an odd-even effect and that grow with the number of refinements, indicating that the AMD ordering is not suitable for our ILU smoothers.
The minimum discarded fill and reverse Cuthill-McKee orderings perform similarly for this simple test problem.
In the remainder of this work, we will make use of the MDF ordering because it has been shown to perform well on problems with high anisotropy and complex coefficients \cite{DAzevedo1992b}.
In Figure \ref{fig:smoothing}, we illustrate the smoothing properties of the above methods by displaying the error after two multigrid iterations, with a random initial guess and zero right-hand side.
It is clear that the pointwise smoothers such as Jacobi and Gauss-Seidel do not smooth the error in regions of anisotropy (i.e.\ at the medians and towards the edges of the domain).
Line Jacobi and AMD-ordered ILU smoothing perform better, but still suffer in anisotropic regions. Line Gauss-Seidel and the remaining ILU methods all result in smooth errors, even in regions of high anisotropy.

\begin{figure}
   \includegraphics[width=\linewidth]{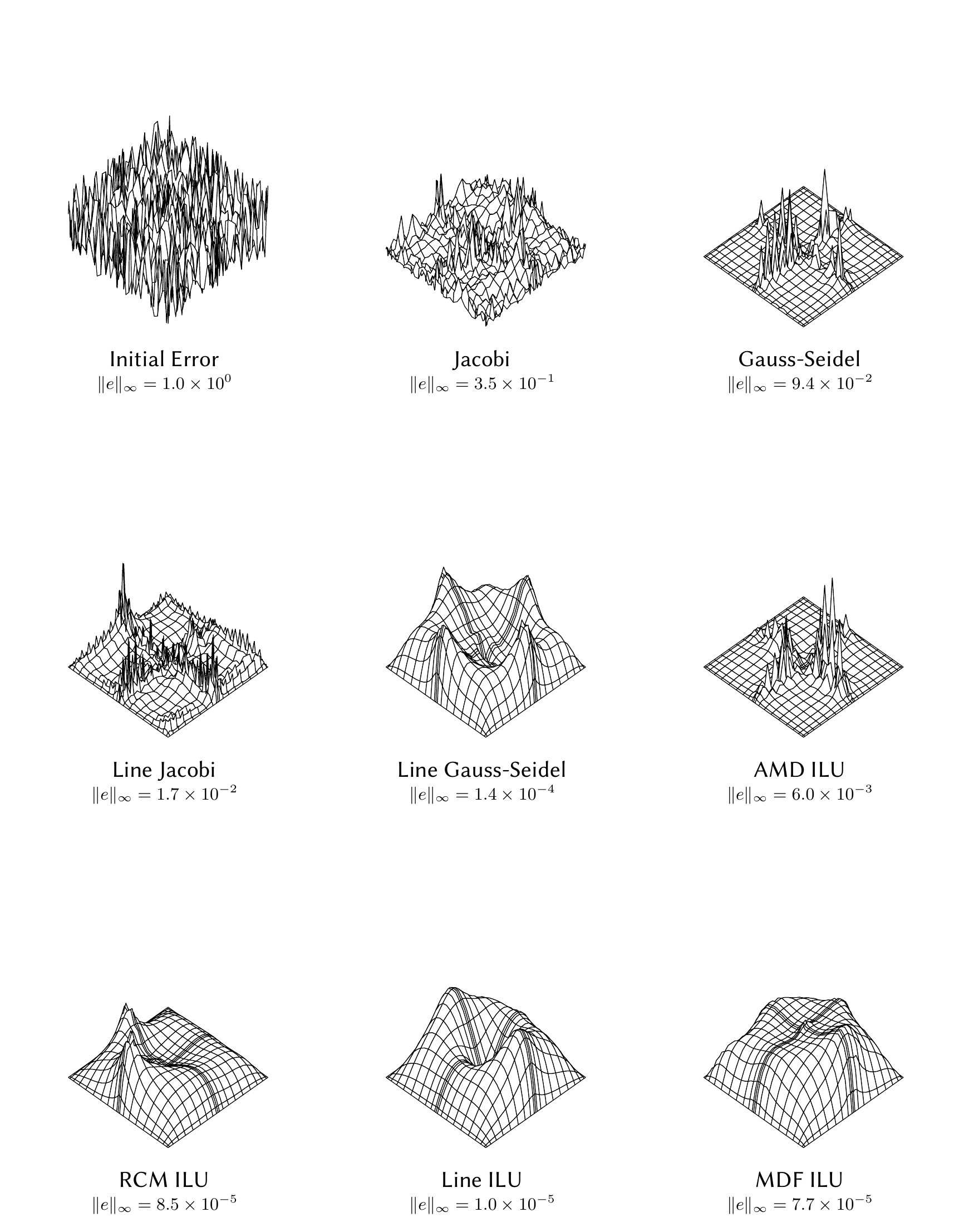}%
   \caption{Two multigrid iterations with different smoothers applied to
            $2\times2$ Cartesian grid with Gauss-Lobatto refinement and random
            initial error.}
   \label{fig:smoothing}
\end{figure}

\subsection{Extension to DG methods} \label{sec:dg-precond}

In this section, we consider the extension of the above preconditioners to discontinuous Galerkin (DG) discretizations of \eqref{eq:poisson}.
Of particular interest are the interior penalty and BR2 discretizations, given by \eqref{eq:ip} and \eqref{eq:br2}, respectively.
The goal is to obtain preconditioners whose convergence is independent of the DG penalty parameter $\eta$.
This is achieved very naturally in the additive Schwarz framework by making use of the space decomposition introduced in \cite{Antonietti2016}.
Recall that $V_{\DG}$ is the space of degree-$p$ piecewise polynomial space defined on the mesh $\T_p$, where no continuity is enforced between elements.
We write
\begin{equation} \label{eq:dg-decomp}
   V_{\DG} = V_B + V_p,
\end{equation}
where $V_p$ is the standard $H^1$ conforming degree-$p$ finite element space.
The space $V_B$ consists of all functions that vanish at all interior Gauss-Lobatto nodes.
Using the space decomposition \eqref{eq:dg-decomp}, the additive Schwarz preconditioner for the DG discretization becomes
\begin{equation}\label{eq:dg-as}
   B_{\DG} = R_B Q_B + R_p Q_p,
\end{equation}
where $R_B$ and $R_p$ are approximate solvers, $R_B \approx A_B^{-1}$ and $R_p \approx A_p^{-1}$.
We take $R_p$ to be the additive Schwarz preconditioner $R_p = B$ for the conforming problem, defined by \eqref{eq:as}, which, according to Theorem \ref{cor:as} is a uniform preconditioner for $A_p$.
As in \cite{Antonietti2016}, we take $R_B$ to be a simple point Jacobi preconditioner.
Then, we have the following result.
\begin{thm}[{[\citenum{Antonietti2016}, Theorem 2]}]
   Let $A_{\IP}$ denote the operator corresponding to the interior penalty bilinear form $a_{\IP}$.
   Let $R_B$ denote the point Jacobi preconditioner on $V_B$ and let $R_p$ be a uniform (in $h$ and $p$) approximation to $A_p^{-1}$.
   Then, the additive Schwarz preconditioner $B_{\DG}$ defined by \eqref{eq:dg-as} satisfies
   \[
      \kappa(B_{\DG} A_{\IP}) \leq C,
   \]
   where the constant $C$ is independent of $h$, $p$, and penalty parameter $\eta$.
\end{thm}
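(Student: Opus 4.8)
The plan is to mimic the abstract additive Schwarz framework already developed in Section~\ref{sec:as}, but now applied to the two-space decomposition $V_{\DG} = V_B + V_p$ from \eqref{eq:dg-decomp}. As before, the condition number $\kappa(B_{\DG} A_{\IP})$ is controlled by constants $c_0, c_1$ satisfying $c_0 a_{\IP}(u,u) \leq a_{\IP}(B_{\DG} A_{\IP} u, u) \leq c_1 a_{\IP}(u,u)$, and the ratio $c_1/c_0$ bounds $\kappa$. Following the same reduction used to obtain \eqref{eq:c-tilde}, I would first invoke the assumed uniform quality of the local solvers: $R_p$ is uniform in $h$ and $p$ by Corollary~\ref{cor:as}, while $R_B$ is point Jacobi on $V_B$. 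The crux is to show that Jacobi is spectrally equivalent to $A_B^{-1}$ with constants independent of $h$, $p$, and critically $\eta$, so that \eqref{eq:approx-solvers} holds with $\eta$-uniform constants $C_0, C_1$; this reduces the problem to bounding $\tilde c_0, \tilde c_1$ for the exact projections $P_B, P_p$.

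The upper bound $\tilde c_1$ is routine: with only two subspaces the finite-overlap argument of \eqref{eq:finite-overlap} gives $\tilde c_1 \leq 2$, independent of all parameters. The lower bound $\tilde c_0$ is where the real work lies, and I would establish it via a stable decomposition in the spirit of Lions' lemma. Given $u_{\DG} \in V_{\DG}$, the natural splitting is $u_p = \Pi u_{\DG} \in V_p$, where $\Pi$ is a suitable averaging/conforming projection, and $u_B = u_{\DG} - u_p \in V_B$ (the remainder vanishes at interior Gauss-Lobatto nodes by construction of $V_B$). I would then need to show
\[
   a_{\IP}(u_p, u_p) + a_{\IP}(u_B, u_B) \lesssim a_{\IP}(u_{\DG}, u_{\DG}),
\]
with the implied constant independent of $\eta$. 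On $V_p$ the penalty term vanishes (jumps are zero for conforming functions), so $a_{\IP}(u_p,u_p) = |u_p|_{H^1}^2$, and this must be bounded by the full DG energy using trace and inverse inequalities that track the $p^2/h$ scaling of $\sigma$; the key point is that the jump-dependent terms in $a_{\IP}$, when combined with the penalty, form a coercive quantity so that the conforming energy is controlled. On $V_B$ one estimates $a_{\IP}(u_B, u_B)$ and shows it is comparable to the penalty contribution plus a controlled volume term.

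\textbf{The main obstacle} is exactly the $\eta$-uniformity of this stable decomposition, and specifically the interaction between the penalty term $\langle \sigma \llb u_{\DG} \rrb, \llb u_{\DG} \rrb \rangle$ and the Jacobi solver on $V_B$. The delicate balance is that as $\eta$ (and hence $\sigma$) grows, the penalty term dominates the DG energy, and one must verify that the $V_B$ component absorbs all of this penalty energy in a way that the simple diagonal (Jacobi) solver $R_B$ can resolve uniformly. This is precisely the mechanism identified in~\cite{Antonietti2016}: because functions in $V_B$ vanish at interior nodes, the jump $\llb u_{\DG} \rrb$ is carried entirely by $u_B$, and the penalty term becomes (up to scaling) a weighted mass-type contribution on the boundary degrees of freedom whose diagonal dominance makes point Jacobi $\eta$-robust. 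Rather than reprove these estimates, since the result is quoted verbatim as Theorem~2 of~\cite{Antonietti2016} and all my hypotheses ($R_p$ uniform in $h,p$; $R_B$ point Jacobi) match theirs exactly, I would simply verify that our space decomposition \eqref{eq:dg-decomp} coincides with theirs and that Corollary~\ref{cor:as} supplies precisely the uniform $R_p$ their theorem requires, then cite their analysis to conclude $\kappa(B_{\DG} A_{\IP}) \leq C$ with $C$ independent of $h$, $p$, and $\eta$.
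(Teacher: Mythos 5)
Your proposal is correct and ends up exactly where the paper does: the paper gives no proof of this theorem, presenting it as a direct citation of Theorem~2 of \cite{Antonietti2016}, with the hypotheses matched by taking $R_p$ to be the conforming additive Schwarz preconditioner (uniform in $h$ and $p$ by Corollary~\ref{cor:as}) and $R_B$ the point Jacobi solver on $V_B$. Your preliminary sketch of the stable two-space decomposition and the $\eta$-robustness mechanism is a fair description of what the cited analysis does, but your operative final step --- verifying that the decomposition \eqref{eq:dg-decomp} and the solver hypotheses coincide with those of \cite{Antonietti2016} and then deferring to that reference --- is precisely the paper's approach.
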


We now extend this result to the BR2 discretization given by \eqref{eq:br2}.
We begin by defining the interior penalty and BR2 norms,
\begin{align}
   \| v \|_{\IP}^2 &= \| \nabla v \|_{L^2(\Omega)}^2
      + \sum_e \frac{p^2}{h} \| \eta^{1/2} \llb v \rrb \|_{L^2(e)}^2, \\
   \| v \|_{\BR}^2 &= \| \nabla v \|_{L^2(\Omega)}^2
      + \sum_e \| \eta^{1/2} r_e(\llb v \rrb) \|_{L^2(\Omega)}^2,
\end{align}
where we recall that the lifting operator $r_e$ is defined by
\begin{equation} \label{eq:lift}
   \int_\Omega r_e(\bm \varphi) \cdot \bm \tau \, dx
      = - \int_e \bm \varphi \cdot \{ \bm \tau \} \, ds.
\end{equation}
Then, we have the following boundedness and coercivity estimates.
\begin{lem}
   For the IP bilinear form, we have (e.g.\ from \cite{Antonietti2010})
   \begin{align*}
      a_{\IP}(u,v) &\lesssim \| u \|_{\IP} \| v \|_{\IP}, \\
      a_{\IP}(u,u) &\gtrsim \| u \|_{\IP}^2.
   \end{align*}
   For the BR2 bilinear form, we have (Cf.\ \cite{Brezzi2000})
   \begin{align*}
      a_{\BR}(u,v) &\lesssim \| u \|_{\BR} \| v \|_{\BR}, \\
      a_{\BR}(u,u) &\gtrsim \| u \|_{\BR}^2. \\
   \end{align*}
\end{lem}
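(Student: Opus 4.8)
The plan is to establish all four inequalities from three standard ingredients: the Cauchy--Schwarz inequality, the discrete trace inequality $\| \nabla v \|_{L^2(e)}^2 \lesssim (p^2/h) \| \nabla v \|_{L^2(D)}^2$ valid for polynomials on an element $D$ adjacent to a face $e$, and Young's inequality $2ab \le \delta a^2 + \delta^{-1} b^2$. Throughout I would use that each element of $\T_p$ abuts a bounded number of faces, so that a sum of face contributions over $\Gamma$ reduces, up to a fixed combinatorial constant, to a sum over elements.

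For the interior penalty form, boundedness is nearly immediate. The volume term $(\nabla u,\nabla v)$ is handled by Cauchy--Schwarz in $L^2(\Omega)$; the penalty term $\langle \sigma \llb u \rrb, \llb v \rrb \rangle$ factors directly into the penalty contributions of $\| u \|_{\IP}$ and $\| v \|_{\IP}$ once one writes $\sigma = \eta p^2/h$ and absorbs $\eta$ into the weighted jumps; and each consistency term $\langle \{\nabla u\}, \llb v \rrb \rangle$ is bounded by Cauchy--Schwarz face-by-face, then the trace inequality to replace $\| \{\nabla u\} \|_{L^2(e)}$ by $(p^2/h)^{1/2} \| \nabla u \|_{L^2(D_e)}$ (with $D_e$ an element adjacent to $e$), then a discrete Cauchy--Schwarz over faces, yielding $\lesssim \eta^{-1/2} \| \nabla u \|_{L^2} \| v \|_{\IP} \lesssim \| u \|_{\IP} \| v \|_{\IP}$ since $\eta$ is bounded below. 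Coercivity is the only step needing care: writing $a_{\IP}(u,u) = \| \nabla u \|_{L^2}^2 - 2 \langle \{\nabla u\}, \llb u \rrb \rangle + \sum_e (\eta p^2/h)\| \llb u \rrb \|_{L^2(e)}^2$, I would bound the cross term by Young's inequality, apply the trace inequality to the gradient factor, and tune the weight $\delta$ so the gradient piece is absorbed into $\tfrac12 \| \nabla u \|_{L^2}^2$ and the jump piece into half the penalty term. This balance is possible exactly when $\eta$ exceeds a threshold set by the trace constant and the number of faces per element, which is the classical large-penalty requirement; the remainder gives $a_{\IP}(u,u) \gtrsim \| u \|_{\IP}^2$.

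For BR2 the distinguishing tool is the defining identity of the lifting operator. Testing $\int_\Omega r_e(\bm\varphi)\cdot\bm\tau\,dx = -\int_e \bm\varphi\cdot\{\bm\tau\}\,ds$ with $\bm\tau = \nabla v$ and $\bm\varphi = \llb u \rrb$ converts the face consistency term into a pure volume pairing, $\langle \llb u \rrb, \{\nabla v\} \rangle = -\sum_e (r_e(\llb u \rrb), \nabla v)_{L^2(\Omega)}$. This is the key move: it replaces the trace inequality and its $p^2/h$ factor by an exact identity against the lifted quantities already present in $\| \cdot \|_{\BR}$, which is what makes the BR2 estimates robust in $\eta$ without any penalty scaling. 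Boundedness then follows from Cauchy--Schwarz on this pairing together with finite overlap, giving $|\langle \llb u \rrb, \{\nabla v\} \rangle| \lesssim \big(\sum_e \| r_e(\llb u \rrb) \|_{L^2(\Omega)}^2\big)^{1/2} \| \nabla v \|_{L^2} \lesssim \eta^{-1/2} \| u \|_{\BR} \| v \|_{\BR}$, while the stabilization term factors directly into the lifting norms. For coercivity I would set $u=v$, use the identity to rewrite $-2\langle \{\nabla u\}, \llb u \rrb \rangle = 2\sum_e (r_e(\llb u \rrb), \nabla u)$, and apply Young's inequality; choosing the parameter so that the gradient contribution stays below $\tfrac12 \| \nabla u \|_{L^2}^2$ leaves a term $(\eta - C)\sum_e \| r_e(\llb u \rrb) \|_{L^2(\Omega)}^2$ controlled from below by a fixed fraction of the lifting penalty.

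I expect the BR2 coercivity to be the main obstacle. The cross term couples $\nabla u$ to the single-face liftings, and making the admissible range of the Young parameter explicit forces a lower threshold $\eta > C N_\partial$, where $N_\partial$ bounds the number of faces of an element; getting $C$ right requires the finite-overlap bound $\sum_e \| \nabla u \|_{L^2(D_e)}^2 \lesssim N_\partial \| \nabla u \|_{L^2}^2$ and the observation that each $r_e(\llb u \rrb)$ is supported on the elements adjacent to $e$ (immediate from the definition, and the point at which shape-regularity of $\T_p$ enters). Crucially this threshold is a fixed $\O(1)$ constant independent of $h$ and $p$, consistent with the BR2 property that $\eta$ need not scale with the mesh. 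Since both forms and norms are standard, I would otherwise lean on the estimates of Antonietti et al.\ and Brezzi et al., presenting the arguments only to the detail needed to confirm robustness in $\eta$.
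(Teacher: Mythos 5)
The paper offers no proof of this lemma at all: both halves are imported from the literature, the IP estimates from \cite{Antonietti2010} and the BR2 estimates from \cite{Brezzi2000}, so there is no internal argument to compare against. Your sketch is a correct reconstruction of what those references do, and it identifies the right crux in each case: for IP, the Young-inequality balancing of the consistency term against the penalty term via the discrete trace inequality, with the classical threshold on $\eta$; for BR2, the replacement of the trace inequality by the exact lifting identity with $\bm\tau = \nabla v$ (which is legitimate here because elementwise gradients of functions in $V_{\DG}$ lie in the vector-valued discrete space over which $r_e$ is defined, and $r_e(\llb u \rrb)$ is supported on the elements adjacent to $e$), together with the finite-overlap count $N_\partial$, yielding an $\O(1)$ threshold on $\eta$ independent of $h$ and $p$. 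One point you should state explicitly rather than leave implicit: your boundedness bounds carry a factor $\eta^{-1/2}$ and your coercivity bounds require $\eta$ above the respective thresholds, so the hidden constants in all four inequalities are uniform only for $\eta$ in the regime $\eta \gtrsim 1$ (and above the coercivity threshold). This is exactly the regime the paper works in --- both discretizations require such $\eta$ for positive-definiteness --- and it is precisely this uniformity that the subsequent norm-equivalence proposition and the $\eta$-robustness corollary rely on, so the restriction is harmless but should be recorded as a hypothesis.
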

Thus, to prove an estimate for the BR2 bilinear form, it suffices to prove equivalence of the norms $\|\cdot\|_{\IP}$ and $\|\cdot\|_{\BR}$.
We will make use of the following results from \cite{Burman2007}.
\begin{lem}[{[\citenum{Burman2007}, Lemma 3.1]}]
   The following trace and inverse trace inequalities hold.
   \begin{alignat}{2}
      \label{eq:trace-1}
      \|v\|_{L^2(\partial K)}^2 &\lesssim \frac{p^2}{h} \| v \|_{L^2(K)}^2
      \qquad &&\text{for all $v \in V_{\DG}$,} \\
     \label{eq:trace-2}
      \|v_B\|_{L^2(K)}^2 &\lesssim \frac{h}{p^2} \| v_B\|_{L^2(\partial K)}^2
      \qquad &&\text{for all $v_B \in V_{B}$,}
   \end{alignat}
   where we recall that the space $V_{B}$ denotes the space of functions in $V_{\DG}$ that vanish on all interior Gauss-Lobatto nodes.
\end{lem}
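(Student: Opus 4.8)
The plan is to prove both inequalities by pulling back to the reference element $\hat K = [-1,1]^d$ through the (regular) element map, using the tensor-product structure to reduce everything to one-dimensional polynomial estimates on $[-1,1]$, and then recovering the powers of $h$ by tracking the Jacobian. Because the reference-element estimates will be $p$-explicit but $h$-free, the $h$ scalings come entirely from the change of variables: a volume norm carries a factor $h^d$ and a single face norm carries $h^{d-1}$, so ratios of boundary to volume norms produce the advertised $h^{-1}$ and $h$.

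For the forward trace inequality \eqref{eq:trace-1}, I would invoke the classical $p$-explicit polynomial trace inequality on the reference interval, namely $|q(\pm 1)|^2 \lesssim p^2 \| q \|_{L^2(-1,1)}^2$ for all $q \in \mathbb{P}_p$. The cleanest route to the sharp $\mathcal{O}(p^2)$ constant is through the reproducing kernel $K(x,y) = \sum_{k=0}^p \tfrac{2k+1}{2} P_k(x) P_k(y)$ of $\mathbb{P}_p$ in $L^2(-1,1)$, for which $K(1,1) = \sum_{k=0}^p \tfrac{2k+1}{2} = \tfrac{(p+1)^2}{2}$ is exactly the optimal trace constant. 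A face of $\hat K$ factors as an endpoint evaluation in the normal variable times the identity in the $d-1$ tangential variables, so tensorizing the one-dimensional bound and summing over the $2d$ faces gives $\| v \|_{L^2(\partial \hat K)}^2 \lesssim p^2 \| v \|_{L^2(\hat K)}^2$; rescaling to $K$ multiplies the left side by $h^{d-1}$ and the right by $h^d$, yielding \eqref{eq:trace-1}.

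The main work, and the main obstacle, is the inverse trace inequality \eqref{eq:trace-2}, which runs against the usual direction and holds only because $v_B \in V_B$ vanishes at every interior Gauss-Lobatto node. The key one-dimensional facts are that the endpoint Gauss-Lobatto weights decay like $\hat w_0 = \hat w_p = \tfrac{2}{p(p+1)} \sim p^{-2}$, while the diagonal (mass-lumped) inner product built from the weights $\hat w_j$ is uniformly equivalent to the true $L^2$ inner product on $\mathbb{P}_p$, i.e.\ $\| q \|_{L^2(-1,1)}^2 \sim \sum_{j=0}^p \hat w_j |q(\xi_j)|^2$ with $p$-independent constants (the Gauss-Lobatto quadrature analysis underlying the FEM-SEM equivalence, cf.\ the one-dimensional result of \cite{Canuto1994} recalled above). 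In one dimension $v_B = v_B(-1)\ell_0 + v_B(1)\ell_p$ for the endpoint Lagrange functions $\ell_0,\ell_p$, and lumping gives $\|\ell_0\|_{L^2}^2 \sim \hat w_0 \sim p^{-2}$ (likewise for $\ell_p$), with Cauchy-Schwarz controlling the cross term, so $\| v_B \|_{L^2(-1,1)}^2 \lesssim p^{-2}\bigl(|v_B(-1)|^2 + |v_B(1)|^2\bigr)$. To tensorize, apply the lumping equivalence on $\hat K$, so that $\| v_B \|_{L^2(\hat K)}^2 \sim \sum_{\bm j} \hat w_{\bm j} v_B(\bm \xi_{\bm j})^2$ with $\hat w_{\bm j} = \prod_i \hat w_{j_i}$; since $v_B$ vanishes at all fully-interior nodes, only multi-indices $\bm j$ with at least one boundary index survive. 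For each such $\bm j$ I would select a boundary index $m$, factor out $\hat w_{j_m} \sim p^{-2}$, and recognize the remaining weighted value $\bigl(\prod_{i\neq m}\hat w_{j_i}\bigr) v_B(\bm\xi_{\bm j})^2$ as a term in the $(d-1)$-dimensional lumped quadrature for $\| v_B \|_{L^2(F)}^2$ on the face $F$ containing that node; summing over nodes and faces identifies the whole expression with $p^{-2}\| v_B \|_{L^2(\partial \hat K)}^2$. Rescaling contributes $h^d$ to the volume side and $h^{d-1}$ to the face side, converting $p^{-2}$ into the claimed $h/p^2$.

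The delicate bookkeeping is for nodes lying on several faces (edges and corners), which are counted more than once when summing over faces; since this only ever inflates the face sum, it is harmless for an upper bound and affects only the $\mathcal{O}(1)$ constant, so I would dispatch it by crude over-counting. I would emphasize that the genuinely nontrivial ingredients are the uniform Gauss-Lobatto mass-lumping equivalence (licensing the replacement of $L^2$ norms by weighted nodal sums with $p$-independent constants) and the precise $p^{-2}$ decay of the endpoint weights; everything else is tensorization and a change of variables. The statement is quoted from \cite{Burman2007} and could simply be cited, but the argument above shows it rests on exactly the Gauss-Lobatto structure that underpins the FEM-SEM equivalence used throughout the paper.
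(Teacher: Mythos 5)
Your argument is correct, but it relates to the paper differently than you may expect: the paper does not prove this lemma at all. It is imported verbatim from Burman and Ern \cite{Burman2007} (their Lemma 3.1) and used as a black box in the norm-equivalence proposition for $\|\cdot\|_{\IP}$ and $\|\cdot\|_{\BR}$ that follows in Section \ref{sec:dg-precond}. So you have reconstructed the proof of the cited result rather than paralleled anything in the paper, and your reconstruction is sound on both halves. For \eqref{eq:trace-1}, the reproducing-kernel evaluation $K(1,1)=(p+1)^2/2$ does give the sharp one-dimensional trace constant, and the normal/tangential tensorization plus the $h^{d-1}$ versus $h^d$ Jacobian bookkeeping is the standard route. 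For \eqref{eq:trace-2} --- the genuinely nonstandard half, which fails for general polynomials and holds only on $V_B$ --- your two ingredients, the endpoint Gauss-Lobatto weights $w_0=w_p=2/(p(p+1))\sim p^{-2}$ and the $p$-uniform equivalence between the lumped and exact $L^2$ norms on degree-$p$ polynomials, are exactly what drives the result; this is essentially how the cited reference proves it. Two details you gloss over are easy to supply but worth stating: (i) the multidimensional lumping equivalence follows from the one-dimensional one because spectral equivalence of symmetric positive-definite matrices is preserved under Kronecker products, so the constants remain $p$-independent (degrading only like $C^d$); and (ii) you need both directions of the equivalence --- exact norm bounded by lumped norm on the element, and lumped norm bounded by exact norm on each face --- both of which the classical Canuto-Quarteroni bounds provide. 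As for what each treatment buys: the paper's citation keeps the DG analysis short and defers to the literature, while your proof makes it self-contained and makes visible that \eqref{eq:trace-2} rests on precisely the same Gauss-Lobatto structure (endpoint clustering of nodes and quadrature/mass-lumping equivalence) that underlies the FEM-SEM equivalence of Proposition \ref{prop:fem-sem}; that conceptual link is a genuine addition, since the paper presents the two facts as unrelated imports.
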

\begin{prop}
   The norms $\|\cdot\|_{\IP}$ and $\|\cdot\|_{\BR}$ are equivalent.
\end{prop}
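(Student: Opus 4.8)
The plan is to exploit that the two norms differ only in their stabilization terms, the gradient contribution $\|\nabla v\|_{L^2(\Omega)}^2$ being common to both. Since the penalty weight $\eta$ enters each face contribution of both norms as a common factor, and the lifting $r_e$ is itself $\eta$-independent, it suffices to establish, face by face, the equivalence
\[
   \frac{p^2}{h}\,\|\llb v\rrb\|_{L^2(e)}^2 \simeq \|r_e(\llb v\rrb)\|_{L^2(\Omega)}^2,
\]
with constants independent of $h$, $p$, and $\eta$; summing over faces (weighted by $\eta$) and adding the shared gradient term then yields the norm equivalence. Throughout I would use that $r_e(\llb v\rrb)$ is supported on the (at most) two elements $K^\pm$ sharing $e$, so that $L^2(\Omega)$ norms of liftings reduce to norms over $K^+\cup K^-$.

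For the upper bound $\|\cdot\|_{\BR}\lesssim\|\cdot\|_{\IP}$, I would test the defining relation \eqref{eq:lift} against $\bm\tau = r_e(\llb v\rrb)$ itself, obtaining
\[
   \|r_e(\llb v\rrb)\|_{L^2(\Omega)}^2 = -\int_e \llb v\rrb\cdot\{r_e(\llb v\rrb)\}\,ds \le \|\llb v\rrb\|_{L^2(e)}\,\|\{r_e(\llb v\rrb)\}\|_{L^2(e)}.
\]
The average trace on the right is controlled by the direct trace inequality \eqref{eq:trace-1} applied on $K^\pm$, producing a factor $(p^2/h)^{1/2}\|r_e(\llb v\rrb)\|_{L^2(\Omega)}$; dividing through gives $\|r_e(\llb v\rrb)\|_{L^2(\Omega)}^2 \lesssim (p^2/h)\|\llb v\rrb\|_{L^2(e)}^2$. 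This direction is routine.

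For the lower bound $\|\cdot\|_{\IP}\lesssim\|\cdot\|_{\BR}$, I would characterize the lifting norm by duality, $\|r_e(\llb v\rrb)\|_{L^2(\Omega)} = \sup_{\bm\tau}|\int_e\llb v\rrb\cdot\{\bm\tau\}|/\|\bm\tau\|_{L^2(\Omega)}$, the supremum being over the discrete test space, and then exhibit a single good field $\bm\tau$. The natural candidate is the componentwise tensor-product extension into $K^+$ obtained by multiplying the face polynomial $\llb v\rrb$ by the one-dimensional Lagrange function of the boundary Gauss-Lobatto node in the normal direction, extended by zero onto $K^-$. By construction this field vanishes at every interior Gauss-Lobatto node, so $\bm\tau\in(V_B)^d$ is admissible, and its trace on $e$ reproduces $\llb v\rrb$, giving $\{\bm\tau\}|_e=\tfrac12\llb v\rrb$ and hence a numerator $\gtrsim\|\llb v\rrb\|_{L^2(e)}^2$. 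The gain is that, since $\bm\tau\in V_B$, the inverse trace inequality \eqref{eq:trace-2} applies and yields $\|\bm\tau\|_{L^2(K^+)}^2\lesssim (h/p^2)\|\bm\tau\|_{L^2(\partial K^+)}^2\lesssim (h/p^2)\|\llb v\rrb\|_{L^2(e)}^2$; feeding these into the duality quotient produces $\|r_e(\llb v\rrb)\|_{L^2(\Omega)}\gtrsim (p^2/h)^{1/2}\|\llb v\rrb\|_{L^2(e)}$, as desired.

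The delicate point, and the expected main obstacle, is entirely in the lower bound: establishing $\|\bm\tau\|_{L^2(\partial K^+)}^2\lesssim\|\llb v\rrb\|_{L^2(e)}^2$ with a $p$-uniform constant. Although the trace of $\bm\tau$ on $e$ is exactly $\llb v\rrb$, the tensor extension generally spills onto the neighboring faces of $K^+$ through the shared edge and vertex nodes. The cleanest resolution is to bypass \eqref{eq:trace-2} and compute $\|\bm\tau\|_{L^2(K^+)}^2$ directly by Fubini as $\|\llb v\rrb\|_{L^2(e)}^2$ times the normal $L^2$-mass of the endpoint Lagrange function, the latter scaling like the Gauss-Lobatto endpoint weight $\simeq h/p^2$; alternatively one keeps \eqref{eq:trace-2} and bounds the spillover by combining a one-dimensional Nikolskii-type inverse estimate $|\llb v\rrb|^2\lesssim p^2\|\llb v\rrb\|_{L^2(e)}^2$ at the edge nodes with the same $p^{-2}$ endpoint-weight scaling. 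In either route the essential fact is that the boundary-node weight scales like $p^{-2}$, which is precisely the anisotropy-compatible scaling that makes the construction robust in $p$ and explains why the space $V_B$ is the correct test space.
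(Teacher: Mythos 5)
Your proof follows essentially the same route as the paper: the same face-by-face reduction to $\tfrac{p^2}{h}\|\llb v\rrb\|_{L^2(e)}^2 \simeq \|r_e(\llb v\rrb)\|_{L^2(\Omega)}^2$, the same upper bound (testing \eqref{eq:lift} with $\bm\tau = r_e(\llb v\rrb)$ and invoking \eqref{eq:trace-1}), and the same lower bound via the nodal extension of $\llb v\rrb$ supported on the face nodes, which lies in $V_B$ --- your duality quotient with this single test field is just a rephrasing of the paper's direct substitution of $\bm\tau = \bm v$ into \eqref{eq:lift} followed by Cauchy--Schwarz and \eqref{eq:trace-2}. The only real difference is presentational: your Fubini/endpoint-weight computation of $\|\bm\tau\|_{L^2(K^+)}$ spells out the step $\|\bm v\|_{L^2(\partial K)} \lesssim \|\llb v\rrb\|_{L^2(e)}$ (control of the spillover onto adjacent faces) that the paper's one-line application of \eqref{eq:trace-2} leaves implicit.
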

\begin{proof}
It is clear that it suffices to prove for every face $e$ that
\begin{align*}
   \| r_e(\llb v \rrb) \|_{L^2(\Omega)}^2 &\lesssim
      \frac{p^2}{h} \| \llb v \rrb \|_{L^2(e)}^2, \\
   \frac{p^2}{h} \| \llb v \rrb \|_{L^2(e)}^2 &\lesssim
       \| r_e(\llb v \rrb) \|_{L^2(\Omega)}^2.
\end{align*}
Let $\llb v \rrb$ be given on $e$.
Let $\bm v$ denote the extension of $\llb v \rrb$ to the element $K$ by setting $\bm v = \llb v \rrb$ componentwise on all Gauss-Lobatto nodes lying on the face $e$, and $\bm v = 0$ at all other Gauss-Lobatto nodes.
Then, choosing $\bm\tau = \bm v$ and $\bm\varphi = \llb v \rrb$ in equation \eqref{eq:lift}, we have
\[
   \| \llb v \rrb\|_{L^2(e)}^2 = - \int_\Omega r_e(\llb v \rrb) \cdot \bm v \,dx
      \leq \| r_e(\llb v \rrb) \|_{L^2(\Omega)} \| \bm v \|_{L^2(\Omega)}
      \lesssim \frac{h^{1/2}}{p}\| r_e(\llb v \rrb) \|_{L^2(\Omega)}
          \| \llb v \rrb \|_{L^2(e)},
\]
where for the last inequality we made use of \eqref{eq:trace-2}.

For the other direction, we let $\bm\tau = r_e(\llb v \rrb)$ and $\bm\varphi =
\llb v \rrb$, and obtain from \eqref{eq:lift}
\[
   \| r_e(\llb v \rrb) \|_{L^2(\Omega)}^2
      \leq \| \llb v \rrb \|_{L^2(e)} \| r_e(\llb v \rrb) \|_{L^2(e)}
      \lesssim \frac{p}{h^{1/2}} \| \llb v \rrb \|_{L^2(e)}
            \| r_e(\llb v \rrb) \|_{L^2(\Omega)},
\]
making use of inequality \eqref{eq:trace-1}.
\end{proof}

\begin{cor}
   Let $A_{\BR}$ denote the operator corresponding to the BR2 bilinear form $a_{\BR}$.
   Then, the additive Schwarz preconditioner $B_{\DG}$ defined by \eqref{eq:dg-as} satisfies
   \[
      \kappa(B_{\DG} A_{\BR}) \leq C,
   \]
   where the constant $C$ is independent of $h$, $p$, and penalty parameter $\eta$.
\end{cor}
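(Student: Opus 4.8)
The plan is to reduce the claim to the spectral equivalence of the two bilinear forms $a_{\IP}$ and $a_{\BR}$, and then simply transfer the condition-number bound already certified for the interior penalty operator in the cited theorem. The key observation is that $B_{\DG}$ is one fixed symmetric positive-definite operator that does not itself depend on the choice of DG stabilization; the two results differ only in which operator, $A_{\IP}$ or $A_{\BR}$, it is applied to. Hence it suffices to show that $A_{\BR}$ is spectrally equivalent to $A_{\IP}$, uniformly in $h$, $p$, and $\eta$.

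First I would combine the boundedness and coercivity estimates of the preceding lemma with the norm equivalence of the preceding proposition. The lemma gives $a_{\IP}(u,u) \approx \| u \|_{\IP}^2$ and $a_{\BR}(u,u) \approx \| u \|_{\BR}^2$, and the proposition gives $\| u \|_{\IP}^2 \approx \| u \|_{\BR}^2$, each with constants independent of $h$, $p$, and $\eta$. Chaining these equivalences produces constants $c, C > 0$, independent of $h$, $p$, and $\eta$, such that
\[
   c\, a_{\IP}(u,u) \leq a_{\BR}(u,u) \leq C\, a_{\IP}(u,u)
   \qquad \text{for all $u \in V_{\DG}$.}
\]
In terms of the operators defined through the $L^2$ inner product this reads $c\, \bm v^T A_{\IP} \bm v \leq \bm v^T A_{\BR} \bm v \leq C\, \bm v^T A_{\IP} \bm v$; that is, $A_{\IP}$ and $A_{\BR}$ are spectrally equivalent.

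Next I would transfer the condition-number bound by a Rayleigh-quotient argument. Since $B_{\DG}$ is symmetric positive definite and $A_{\BR}$ is SPD by coercivity, the eigenvalues of $B_{\DG} A_{\BR}$ coincide with those of the symmetric operator $B_{\DG}^{1/2} A_{\BR} B_{\DG}^{1/2}$ and are therefore the values of the generalized Rayleigh quotient $\bm v^T A_{\BR} \bm v / \bm v^T B_{\DG}^{-1} \bm v$; the same holds for $A_{\IP}$. Dividing the spectral equivalence above by the positive quantity $\bm v^T B_{\DG}^{-1} \bm v$ yields
\[
   \lambda_{\rm max}(B_{\DG} A_{\BR}) \leq C\, \lambda_{\rm max}(B_{\DG} A_{\IP}),
   \qquad
   \lambda_{\rm min}(B_{\DG} A_{\BR}) \geq c\, \lambda_{\rm min}(B_{\DG} A_{\IP}),
\]
so that $\kappa(B_{\DG} A_{\BR}) \leq (C/c)\, \kappa(B_{\DG} A_{\IP})$. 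By the cited theorem the right-hand factor is bounded by a constant independent of $h$, $p$, and $\eta$, and hence so is $\kappa(B_{\DG} A_{\BR})$, which is the claim.

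The genuinely technical work has already been carried out in the norm-equivalence proposition, whose proof rests on the trace and inverse-trace inequalities of the preceding lemma, so by the time we reach this corollary there is no substantial remaining obstacle. The one point I would take care to verify is that the equivalence constant $C/c$ is independent of the penalty parameter $\eta$; this holds because $\eta$ enters as a common multiplicative factor in the face contributions of both $\| \cdot \|_{\IP}$ and $\| \cdot \|_{\BR}$, so every equivalence in the chain is itself $\eta$-independent and the dependence cancels in the ratio. I would also emphasize for completeness that the argument uses only that $B_{\DG}$ is SPD and \emph{fixed}, so that precisely the same preconditioner established for $A_{\IP}$ serves for $A_{\BR}$ without modification.
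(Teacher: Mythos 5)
Your proof is correct and is essentially the paper's own (largely implicit) argument made explicit: the paper likewise reduces the corollary to the boundedness/coercivity lemma together with the equivalence of $\|\cdot\|_{\IP}$ and $\|\cdot\|_{\BR}$, and your Rayleigh-quotient step is the natural way to turn that equivalence of quadratic forms into a transfer of the condition-number bound from $A_{\IP}$ to $A_{\BR}$. Your observation that $\eta$ enters both face terms identically, so the equivalence constants are $\eta$-independent, is exactly the point the paper is relying on.

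One premise deserves a caveat: $B_{\DG}$ is not quite stabilization-independent, since the local solver $R_B$ is the point Jacobi approximation to $A_B^{-1}$, i.e.\ it is built from the diagonal of the restriction to $V_B$ of whichever operator is being preconditioned; the BR2 version of $B_{\DG}$ therefore differs, strictly speaking, from the IP version covered by the cited theorem. This is easily repaired with the equivalence you already established: taking $u$ equal to each basis function of $V_B$ in $c\, a_{\IP}(u,u) \leq a_{\BR}(u,u) \leq C\, a_{\IP}(u,u)$ shows the two diagonals, hence the two Jacobi solvers, hence the two preconditioners (which are sums of the corresponding subspace terms) are spectrally equivalent uniformly in $h$, $p$, and $\eta$, so your transfer argument picks up one more bounded factor and the conclusion stands.
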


In Section \ref{sec:numerical-dg}, we present numerical results using both the IP and BR2 discretizations, verifying the robustness of the resulting preconditioner.

\subsection{Computational cost and memory requirements}

We briefly discuss the computational cost associated with the matrix-free preconditioners described above.
Recall that the number of degrees of freedom for the high-order problem \eqref{eq:system} scales like $\ndof = \O(p^d \nel)$, where $\nel$ is the number of elements in the mesh $\T_p$.
Using sum-factorized operator evaluation, the action of the operator $K_p$ can be computed in $\O(p^{d+1}\nel) = \O(p\ndof)$ operations \cite{Orszag1980, Pazner2018}.
The matrices associated with the low-order refined discretization are assembled, requiring a constant number of operations per degree of freedom.
Similarly, because of the inherent sparsity of the low-order operator, the number of nonzeros per row of these matrices is constant, independent of $p$.
Let $\ncoarse$ denote the number of degrees of freedom in the coarse (multilinear) finite element space defined on the mesh $\T_p$.
It is clear that $\ncoarse < \ndof$, and in the very high-order case, we have $\ncoarse \ll \ndof$.
Additionally, let $\npatch$ denote an upper bound on the number of degrees of freedom in a given patch and recall that $\npatches$ denotes the total number of mesh patches, so that $\ndof \lesssim \npatches\npatch$.
We will assume that the number of elements per patch is bounded by some constant, and so $\npatch = \O(p^d)$.
The minimum discarded fill ordering on each patch is performed in $\O(\npatch \log(\npatch))$ time using a min-heap data structure \cite{Persson2008}.
This cost is asymptotically dominated by the cost of the high-order operator evaluation, and is only performed once as a preprocessing step, rather than at every iteration.
The computational cost and memory requirements for the preconditioning operations are summarized in Table \ref{tab:complexity}.
Given that the preconditioner is robust in the discretization parameters, we conclude that the total runtime for the algorithm scales like the cost of a matrix-free application of the high-order operator.
Furthermore, the total memory cost associated with building and applying the preconditioner is optimal, scaling linearly in the number of degrees of freedom.

\begin{table}[h]
   \centering
   \caption{Computational complexity and memory requirements for
            preconditioning operations}
   \label{tab:complexity}
   \begin{tabular}{r @{\hskip 24pt} ll}
      \toprule
       & Operations & Memory \\
      \midrule
      Matrix-free application of $K_p$ & $\O(p\ndof)$ & $\O(\ndof)$ \\
      Matrix-based assembly of $K_h$ & $\O(\ndof)$ & $\O(\ndof)$ \\
      Coarse solver $R_0$ & $\O(\ncoarse)$  & $\O(\ncoarse)$ \\
      MDF ordering & $\O(\ndof\log(\npatch))$  & $\O(\ndof)$ \\
      ILU smoothing & $\O(\ndof)$  & $\O(\ndof)$ \\
      \midrule
      Total preconditioner application & $\O(p \ndof)$ & $\O(\ndof)$ \\
      \bottomrule
   \end{tabular}
\end{table}

\section{Numerical results} \label{sec:numerical}

In this section, we present a variety of numerical results intended to study the performance of the preconditioner and verify the properties shown in the preceding sections.
The preconditioner was implemented in the MFEM finite element discretization library \cite{Anderson2019}.

\subsection{Cartesian grid}

For a first numerical example, we consider the $h$- and $p$-refinement of a Cartesian grid with constant coefficient $b = 1$.
We begin with a $2 \times 2$ grid, and refine uniformly by factors of two, so that the finest mesh is a $32 \times 32$ grid.
Polynomial degrees between 2 and 20 are used, such that the largest system consider has a total of 410{,}881 degrees of freedom.
We first study the performance of the local multigrid solver by using a single additive Schwarz patch for the entire domain.
This corresponds to preconditioning the high-order system $K_p$ with an element-structured multigrid V-cycle applied to the low-order refined system $K_h$, coarsening each element using the strategy described in Section \ref{sec:multigrid}.
The coarsening procedure terminates when all interior Gauss-Lobatto points have been eliminated.
One application of MDF-ordered ILU(0) is used for pre-smoothing and post-smoothing at each level.
The coarse solve is performed on the $p=1$ discretization on the coarse mesh $\T_p$, using one V-cycle of BoomerAMG.
The number of conjugate gradient iterations required to reduce the residual by a factor of $10^{8}$ are shown in Table \ref{tab:cart-iters}.
We observe that for this problem, fewer than 20 iterations were required for all cases considered.
We also observe a mild pre-asymptotic dependence of the number of iterations on $p$ for low polynomial degrees.
However, for $p$ larger than about 10, the iterations remain almost constant.

Additionally, we study the performance of the additive Schwarz method with vertex patches.
In this case, the domain is decomposed into overlapping subdomains corresponding to each vertex of the coarse mesh $\T_p$.
Each subdomain consists of all elements containing the given vertex.
The same geometric multigrid procedure with ILU smoothing as described above is applied to each of the low-order refined subdomain local problems independently.
As before, the coarse solver is one V-cycle of BoomerAMG on the global coarse problem.
The number of conjugate gradient iterations required to reduce the residual by a factor of $10^{8}$ are shown in Table \ref{tab:cart-iters-patch}.
Similar to the previous case, we observe a pre-asymptotic dependence on the mesh size and polynomial degree.
The number of iterations in larger by about a factor of two when compared with one patch per domain.
However, in the case of vertex patches, the computations can be performed in parallel on each patch.

Note that the iteration counts shown in Tables \ref{tab:cart-iters} and \ref{tab:cart-iters-patch} are largely consistent with past results from the literature on low-order preconditioning of high-order operators.
In particular, we compare these results to the low-order finite element preconditioning of spectral element methods presented in \cite{Lottes2005}, in which several additive Schwarz and multigrid preconditioners were studied.
The results from \cite{Lottes2005} suggest that increased performance could be obtained by using a hybrid Schwarz strategy (incorporating the coarse solve in a multiplicative rather than additive way), and by using a larger coarse space.
Additionally, in \cite{Lottes2005}, the importance of weighting the smoother by a diagonal counting matrix was emphasized.
Investigating these considerations in the context of the present solver is future work.

\begin{table}
\centering
\caption{Number of CG iterations required to reduce the residual by a factor of $10^{8}$. Constant coefficient Cartesian grid with one patch.}
\label{tab:cart-iters}
\begin{tabular*}{0.99\textwidth}{l|@{\extracolsep{\fill}}ccccc}
\toprule
$p$ & $n_x=2$ & $n_x=4$ & $n_x=8$ & $n_x=16$ & $n_x=32$ \\
\midrule
2 & 4 & 10 & 12 & 12 & 13\\
4 & 13 & 14 & 14 & 14 & 14\\
6 & 15 & 16 & 15 & 16 & 16\\
8 & 16 & 16 & 16 & 15 & 16\\
10 & 17 & 17 & 17 & 17 & 18\\
12 & 17 & 18 & 17 & 18 & 19\\
14 & 18 & 18 & 17 & 19 & 19\\
16 & 18 & 17 & 17 & 16 & 18\\
18 & 18 & 18 & 17 & 19 & 19\\
20 & 18 & 18 & 17 & 19 & 19\\
\bottomrule
\end{tabular*}

\vspace{\floatsep}

\caption{Number of CG iterations required to reduce the residual by a factor of $10^{8}$. Constant coefficient Cartesian grid with vertex patches.}
\label{tab:cart-iters-patch}
\begin{tabular*}{0.99\textwidth}{l|@{\extracolsep{\fill}}ccccc}
\toprule
$p$ & $n_x=2$ & $n_x=4$ & $n_x=8$ & $n_x=16$ & $n_x=32$ \\
\midrule
2 & 4 & 10 & 14 & 20 & 26\\
4 & 12 & 17 & 22 & 26 & 29\\
6 & 17 & 22 & 26 & 31 & 32\\
8 & 19 & 24 & 28 & 33 & 34\\
10 & 22 & 26 & 31 & 35 & 36\\
12 & 24 & 27 & 30 & 35 & 36\\
14 & 24 & 30 & 32 & 36 & 37\\
16 & 25 & 31 & 33 & 36 & 37\\
18 & 26 & 31 & 34 & 37 & 38\\
20 & 27 & 32 & 34 & 37 & 38\\
\bottomrule
\end{tabular*}
\end{table}

We also study the performance of the preconditioner in terms of wall-clock run time under $h$- and $p$-refinement in 2D and 3D.
We first fix the polynomial degree to be $p=8$, and perform a sequence of uniform refinements.
The wall-clock time required for the solution (i.e.\ total time over all CG iterations) scales optimally (i.e.\ $\mathcal{O}\left(\ndof\right)$).
The time required for setup (i.e.\ construction of the preconditioner) scales like $\mathcal{O}\left( \ndof \log(\ndof) \right)$, because of the $n \log(n)$ complexity of the MDF ordering used in the ILU factorization.
In 2D, the cost of the MDF factorization appears to be negligible, and we observe the setup run time to scale close to linearly in the number of degrees of freedom.
However, in 3D, the cost of computing the MDF ordering is no longer negligible.
We also perform $p$-refinement on a fixed mesh in 2D and 3D.
In both cases, we observe that the setup run time scales linearly in the number of degrees of freedom ($\mathcal{O}(p^d)$) since the construction of the preconditioner requires only the low-order operator $K_h$.
The run time for the CG iterations scales like $\mathcal{O}(p^{d+1})$ because of the cost of applying the high-order operator $K_p$ using the sum factorization technique.
Figures \ref{fig:runtimes-h} and \ref{fig:runtimes-p} show timings performed in serial running on an Intel Xeon Gold 6130 CPU (2.10 GHz), confirming these scalings.

\begin{figure} \centering
\includegraphics[width=0.5\linewidth]{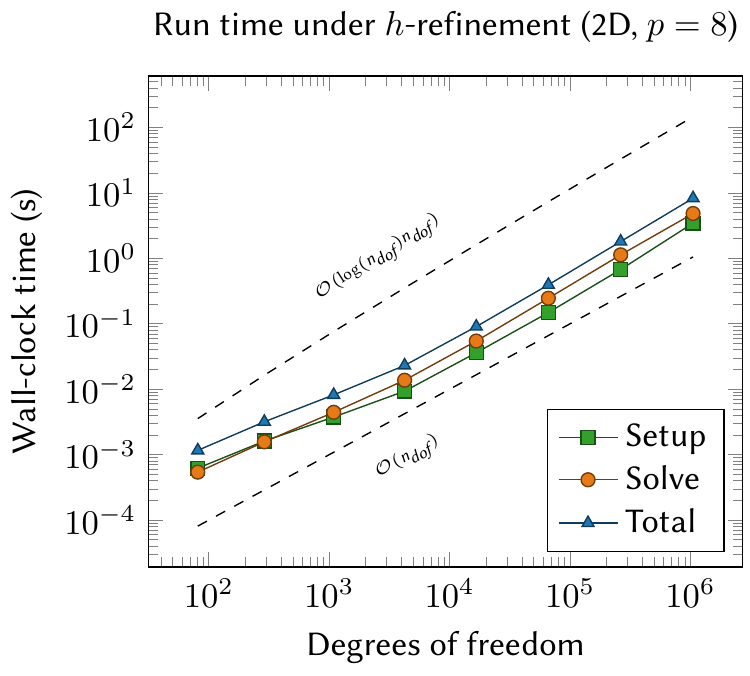}%
\includegraphics[width=0.5\linewidth]{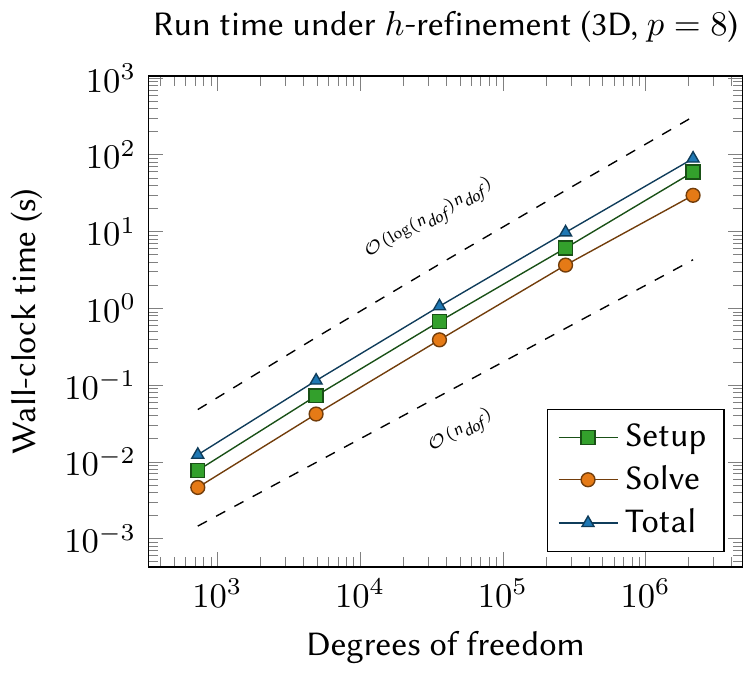}
\caption{Wall-clock run times in 2D and 3D under uniform $h$-refinement using
         fixed polynomial degree $p=8$.}
\label{fig:runtimes-h}

\vspace{\floatsep}

\includegraphics[width=0.5\linewidth]{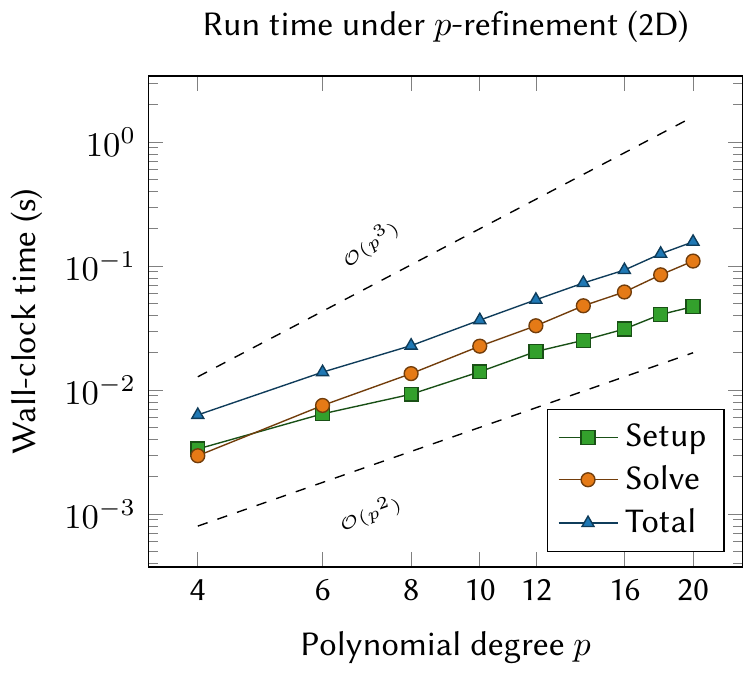}%
\includegraphics[width=0.5\linewidth]{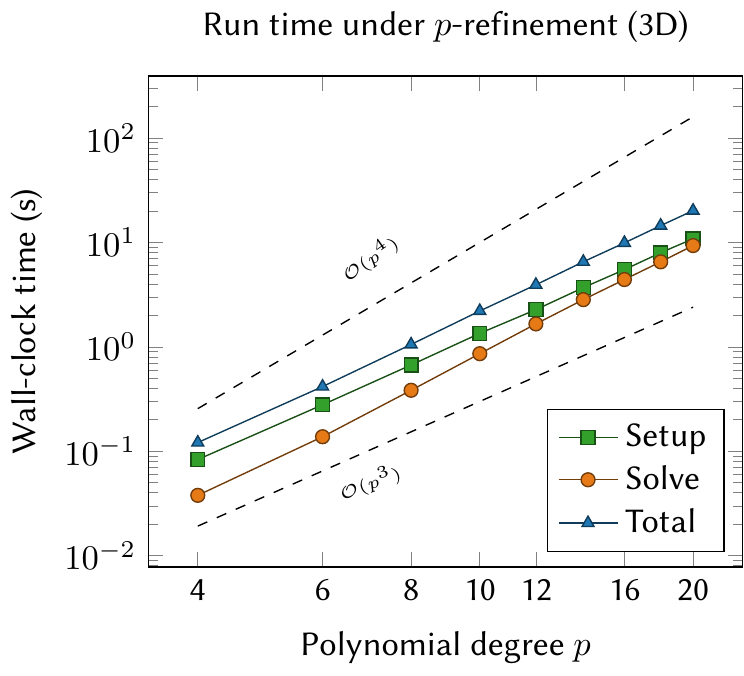}
\caption{Wall-clock run times in 2D and 3D under $p$-refinement using
         a fixed Cartesian mesh.}
\label{fig:runtimes-p}
\end{figure}

\subsection{Variable coefficients and unstructured meshes}
\label{sec:variable-coeff}

We now consider the variable coefficient case, with several choices of non-constant function $b(x,y)$.
The coefficients (the first three of which are from \cite{Shen2000}) are:
\begin{itemize}
\item $b_1(x,y) = 10^4(1-x^2)(1-y^2)$, which features sharp gradients near the
      boundaries of the domain $[-1,1]^2$.
\item $b_2(x,y) = 100x^2 + y^2 + 1$, which is anisotropic, and results in an
      anisotropic solution with a steep gradient.
\item $b_3(x,y) = (1 + x^2 + y^2)^4$.
\item $b_4(x,y)$ is chosen to be a discontinuous, piecewise constant coefficient
      with values 10 and 1 on elements, distributed randomly on the mesh.
\end{itemize}
We also consider unstructured meshes and more complicated geometries.
The meshes used for the numerical experiments are shown in Figure \ref{fig:meshes}.
The first mesh is an unstructured quadrilateral mesh of the domain $[-1,1]^2$.
The domain of the second mesh consists of the unit square $[-1,1]^2$ with a disk of radius $1/4$ removed.
The iteration counts remain bounded for all coefficients, although convergence is slower for $b_1$ than for the remaining coefficients.
The additive Schwarz method with one patch per vertex converges in about twice as many iterations as using a single patch for the domain.
Iteration counts are similar between the two meshes, and are about 50\% larger than in the case of the Cartesian grid.

\begin{figure}
\begin{minipage}{0.49\linewidth}\centering\small
\includegraphics[width=0.75\linewidth]{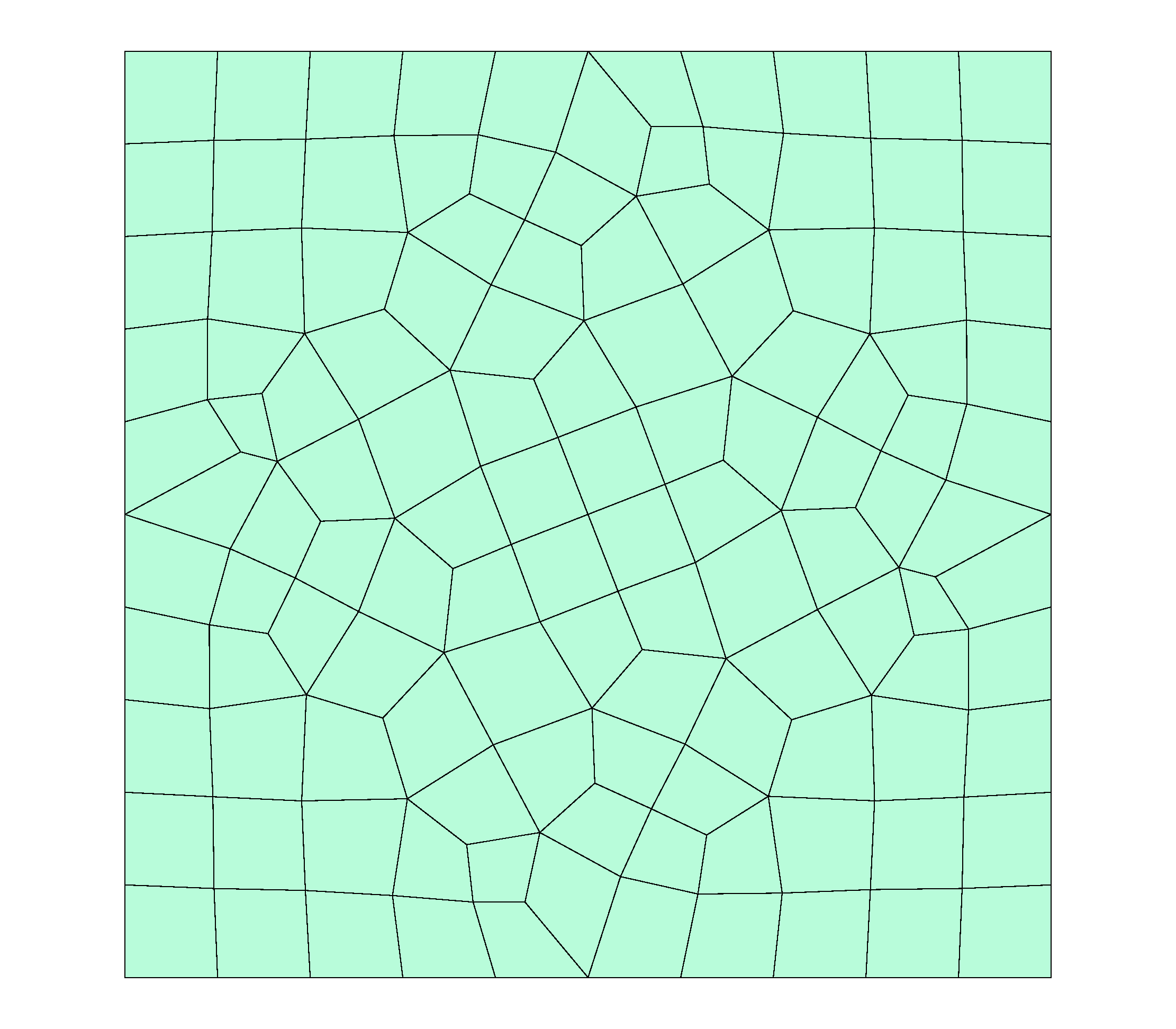}\par(a)
\end{minipage}
\hfill
\begin{minipage}{0.49\linewidth}\centering\small
\includegraphics[width=0.75\linewidth]{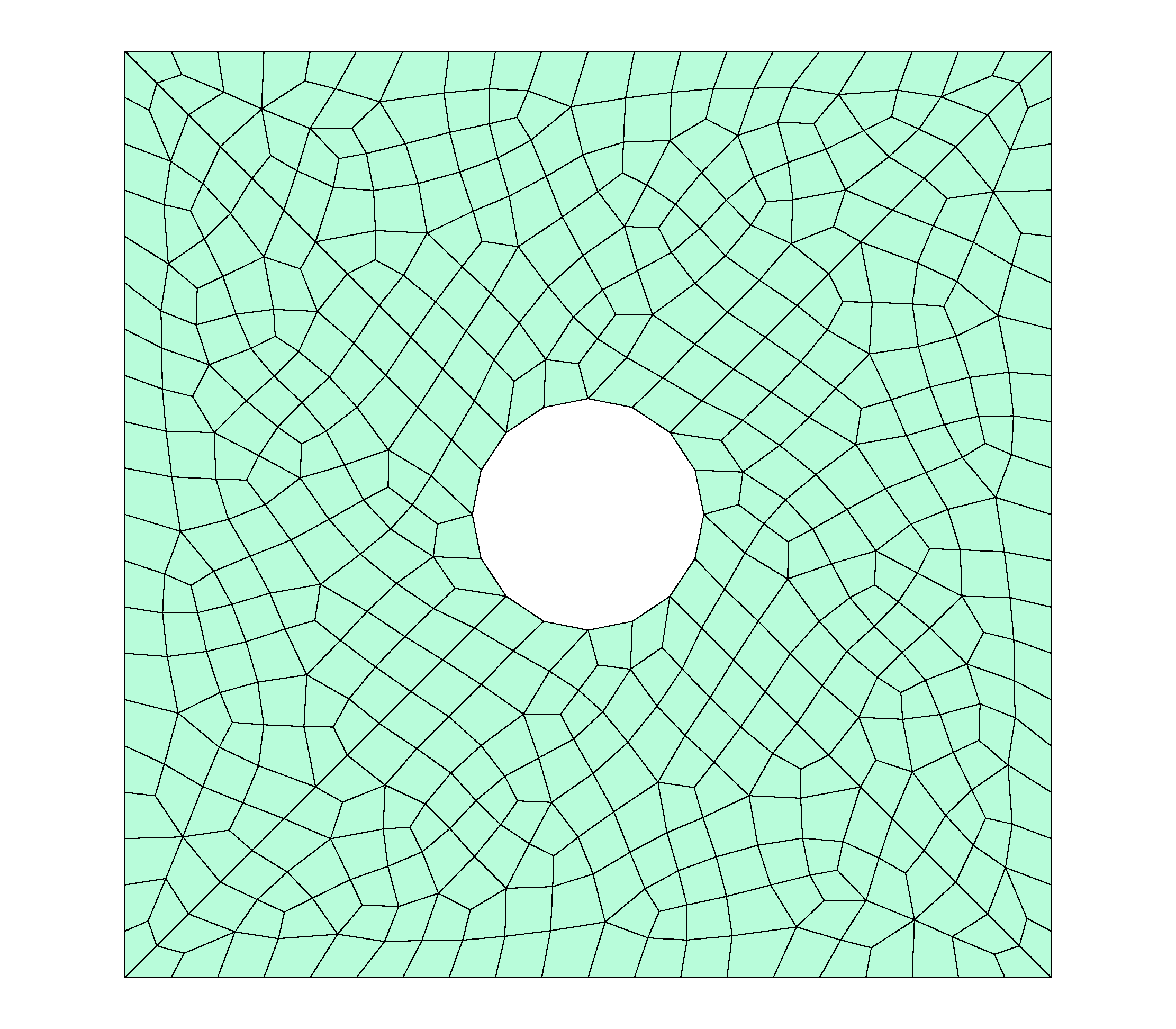}\par(b)
\end{minipage}
\caption{Unstructured quadrilateral meshes used for the numerical experiments.}
\label{fig:meshes}

\vspace{\floatsep}

\centering
\includegraphics{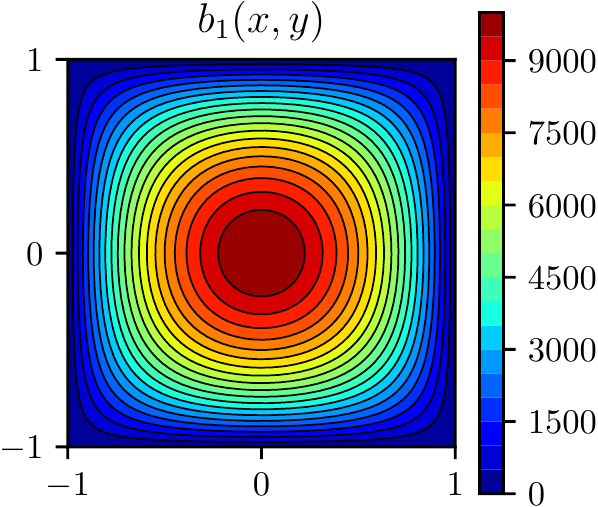} \hspace{1cm}
\includegraphics{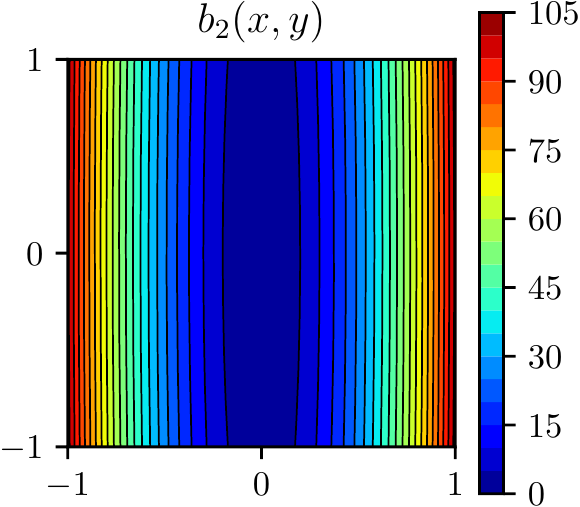}
\includegraphics{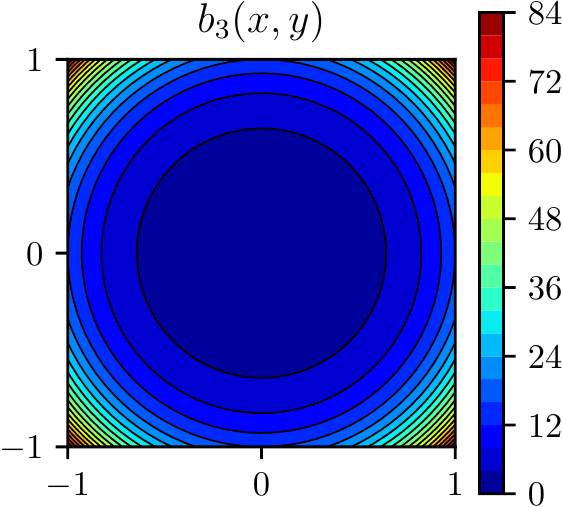}
\caption{Variable coefficients $b_i(x,y)$ used for the examples in Section
         \ref{sec:variable-coeff}.}
\label{fig:coeffs}
\end{figure}

\begin{table}
\centering
\caption{Variable coefficient case on unstructured mesh (a). Numer of iterations
         required to reduce the residual by $10^8$.}
{\setlength{\tabcolsep}{8pt}
\begin{tabular}{l|cccc@{\hskip 24pt}|@{\hskip 24pt}cccc}
\toprule
      \parbox[t][11pt]{0pt}{}
    & \multicolumn{4}{c@{\hskip 24pt}|@{\hskip 24pt}}{Single patch}
    & \multicolumn{4}{@{}c}{Vertex patches} \\
$p$ & $b_1(x,y)$ & $b_2(x,y)$ & $b_3(x,y)$ & $b_4(x,y)$
    & $b_1(x,y)$ & $b_2(x,y)$ & $b_3(x,y)$ & $b_4(x,y)$ \\
\midrule
2 & 15 & 15 & 14 & 16 & 24 & 23 & 22 & 26\\
4 & 18 & 17 & 17 & 17 & 35 & 30 & 29 & 31\\
6 & 23 & 19 & 19 & 19 & 42 & 34 & 34 & 35\\
8 & 23 & 21 & 21 & 21 & 49 & 38 & 37 & 38\\
10 & 29 & 22 & 22 & 23 & 55 & 40 & 40 & 41\\
12 & 30 & 22 & 22 & 23 & 58 & 41 & 41 & 43\\
14 & 35 & 23 & 23 & 26 & 63 & 43 & 43 & 44\\
16 & 30 & 24 & 24 & 25 & 66 & 44 & 44 & 46\\
18 & 41 & 25 & 25 & 28 & 71 & 46 & 45 & 47\\
20 & 37 & 24 & 24 & 28 & 72 & 46 & 46 & 48\\
\bottomrule
\end{tabular}

\vspace{\floatsep}

\caption{Variable coefficient case on unstructured mesh (b). Numer of iterations
         required to reduce the residual by $10^8$.}
\begin{tabular}{l|cccc@{\hskip 24pt}|@{\hskip 24pt}cccc}
\toprule
      \parbox[t][11pt]{0pt}{}
    & \multicolumn{4}{c@{\hskip 24pt}|@{\hskip 24pt}}{Single patch}
    & \multicolumn{4}{@{}c}{Vertex patches} \\
$p$ & $b_1(x,y)$ & $b_2(x,y)$ & $b_3(x,y)$ & $b_4(x,y)$
    & $b_1(x,y)$ & $b_2(x,y)$ & $b_3(x,y)$ & $b_4(x,y)$ \\
\midrule
2 & 15 & 15 & 15 & 16 & 31 & 28 & 27 & 31\\
4 & 18 & 17 & 17 & 18 & 39 & 34 & 34 & 33\\
6 & 23 & 20 & 20 & 22 & 46 & 39 & 38 & 37\\
8 & 24 & 22 & 22 & 22 & 52 & 42 & 41 & 41\\
10 & 31 & 23 & 23 & 26 & 56 & 44 & 43 & 44\\
12 & 30 & 24 & 24 & 27 & 60 & 45 & 44 & 46\\
14 & 36 & 25 & 25 & 30 & 66 & 46 & 46 & 50\\
16 & 29 & 25 & 25 & 27 & 66 & 47 & 47 & 50\\
18 & 41 & 27 & 26 & 34 & 72 & 48 & 47 & 55\\
20 & 37 & 26 & 26 & 31 & 71 & 48 & 48 & 53\\
\bottomrule
\end{tabular}
}
\end{table}

\subsection{Discontinuous Galerkin methods} \label{sec:numerical-dg}

To test the performance of the additive Schwarz preconditioner $B_{\DG}$ applied
to interior penalty and BR2 discontinuous Galerkin discretizations, we again
make use of the unstructured meshes shown in Figure \ref{fig:meshes}. We measure
the number of conjugate gradient iterations required to reduce the residual by a
factor of $10^8$ under $h$- and $p$-refinement. We present these results in
Tables \ref{tab:dg-p-refinement} and \ref{tab:dg-h-refinement}. We note that the
number of iterations do not increase as the mesh is further refined, verifying
robustness in $h$. Similar to the continuous Galerkin examples presented
previously, we observe a slight pre-asymptotic increase in the iteration count
with $p$, though the number of iterations remains bounded, verifying robustness
in $p$. Since of the main features of the preconditioner is its robustness in
the IP and BR2 penalty parameters $\eta$, we consider a fixed mesh and set
$p=6$. We increase the value of $\eta$ from 1 to 10{,}000 by factors of 10, and
report the iterations required to converge in Table \ref{tab:dg-penalty}.
Confirming the results from Section \ref{sec:dg-precond}, the iteration counts
remains robust in $\eta$ for both the IP and BR2 methods.

\begin{table}
\centering
\caption{BR2 discontinuous Galerkin method. Number of CG iterations required to
         reduce the residual by a factor of $10^8$ under $p$-refinement.}
\label{tab:dg-p-refinement}
\begin{tabular}{c|cc}
\toprule
$p$ & Mesh (a) & Mesh(b) \\
\midrule
2 & 21 & 22 \\
4 & 22 & 23 \\
6 & 25 & 26 \\
8 & 23 & 26 \\
10 & 26 & 29 \\
12 & 26 & 30 \\
14 & 27 & 31 \\
16 & 26 & 29 \\
18 & 29 & 32 \\
20 & 28 & 31 \\
\bottomrule
\end{tabular}

\vspace{\floatsep}

\caption{BR2 discontinuous Galerkin method. Number of CG iterations required to
         reduce the residual by a factor of $10^8$ under $h$-refinement.}
\label{tab:dg-h-refinement}
\begin{tabular}{c|crcr}
\toprule
\parbox[t][11pt]{0pt}{}
   & \multicolumn{2}{c}{Mesh (a)} & \multicolumn{2}{c}{Mesh (b)} \\
Refinements & Iterations & DOFs & Iterations & DOFs \\
\midrule
0 & 25 & 5{,}880 & 26 & 23{,}814 \\
1 & 24 & 23{,}520 & 26 & 95{,}256 \\
2 & 23 & 94{,}080 & 25 & 381{,}024 \\
3 & 22 & 376{,}320 & 24 & 1{,}524{,}096 \\
\bottomrule
\end{tabular}
\end{table}

\begin{table}
\centering
\caption{Effect of DG penalty parameter $\eta$ on convergence. Number of
         iterations required to reduce the residual by a factor of $10^8$
         using interior penalty and BR2 DG methods on the unstructured meshes
         from Figure \ref{fig:meshes}.}
\label{tab:dg-penalty}
\begin{tabular}{c|llll}
\toprule
\parbox[t][11pt]{0pt}{}
   & \multicolumn{2}{c}{Mesh (a)} & \multicolumn{2}{c}{Mesh (b)} \\
$\eta$ & IP & BR2 & IP & BR2 \\
\midrule
$10^0$ & 29 & 23 & 33 & 25 \\
$10^1$ & 27 & 22 & 28 & 23 \\
$10^2$ & 25 & 18 & 27 & 21 \\
$10^3$ & 23 & 18 & 26 & 21 \\
$10^4$ & 22 & 18 & 24 & 19 \\
\bottomrule
\end{tabular}

\end{table}

\subsection{Comparison with AMG methods}

In order to compare the present method with a possible alternative matrix-free method for high-order operators, we consider a FEM-SEM preconditioner using an algebraic multigrid V-cycle as an approximate solver for the low-order preconditioner.
We make use of BoomerAMG with both $\ell_1$-scaled hybrid symmetric Gauss-Seidel relaxation and $\ell_1$-scaled Jacobi relaxation \cite{Henson2002}.
We compare the number of CG iterations for increasing polynomial degree $p$.
For this test, we use an unstructured mesh with 80 curved isoparametric quadrilateral elements.
We also consider the high-order DG BR2 system with $\eta = 10$ to ensure coercivity of the system.
Because the FEM-SEM equivalence is less straightforward when applied to DG systems, we compare with AMG applied to the fully assembled high-order DG system (i.e.\ standard matrix-based AMG).
The results are shown in Table \ref{tab:amg}.
We notice that the additive Schwarz preconditioner is robust in $p$ and in the number of mesh patches.
Consistent with the previous results, using vertex patches requires about 1.5 times more iterations than a single patch for the domain.
However, these patches allow for parallelization, because the local solutions on all the patches may be computed simultaneously.
The algebraic multigrid methods are not robust in $p$, likely due to the anisotropy introduced in the low-order refined mesh by the Gauss-Lobatto points.
When applied to discontinuous Galerkin discretizations, the number of CG iterations required when using AMG methods were quite large, consistent with results previously reported in the literature, indicating that multigrid methods may struggle with DG discretizations with lifting operators such as local DG or BR2 \cite{Antonietti2015,Olson2011}.

\begin{table}
\centering
\caption{Comparison with algebraic multigrid methods.
   Number of CG iterations required to reduce the residual by a factor of $10^8$.
   AMG(J) and AMG(GS) indicate BoomerAMG using $\ell_1$-scaled Jacobi and hybrid symmetric Gauss-Seidel, respectively.
   LOR-AMG indicates that the AMG V-cycle is applied to the low-order refined system. $B(1)$ and $B(V)$ indicate the present preconditioner with a single patch, and one patch per vertex, respectively.}
\label{tab:amg}
{\setlength{\tabcolsep}{8pt}
\begin{tabular}{c|cccc|cccc}
\toprule
\parbox[t][11pt]{0pt}{}
   & \multicolumn{4}{c|}{Continuous Galerkin}
   & \multicolumn{4}{c}{Discontinuous Galerkin} \\
$p$ & $B(1)$ & $B(V)$ & LOR-AMG(J) & LOR-AMG(G-S)
    & $B(1)$ & $B(V)$ & AMG(J) & AMG(G-S) \\
\midrule
2 & 15 & 24 & 26 & 20 & 18 & 37 & 186 & 81 \\
4 & 18 & 31 & 39 & 31 & 22 & 46 & 329 & 124 \\
6 & 20 & 35 & 53 & 40 & 23 & 48 & 370 & 145 \\
8 & 21 & 38 & 65 & 46 & 25 & 49 & 361 & 130 \\
10 & 23 & 40 & 76 & 52 & 26 & 51 & 381 & 130 \\
12 & 26 & 42 & 96 & 61 & 27 & 53 & 318 & 122 \\
14 & 28 & 43 & 106 & 71 & 29 & 53 & 344 & 131 \\
16 & 30 & 44 & 114 & 75 & 31 & 54 & 296 & 123 \\
18 & 31 & 45 & 123 & 77 & 34 & 55 & 319 & 124 \\
20 & 33 & 46 & 132 & 87 & 36 & 55 & 287 & 110 \\
\bottomrule
\end{tabular}
}
\end{table}

\subsection{Anisotropic meshes}
\label{sec:aniso}

\begin{figure}
  \centering
  \includegraphics[width=0.8\linewidth]{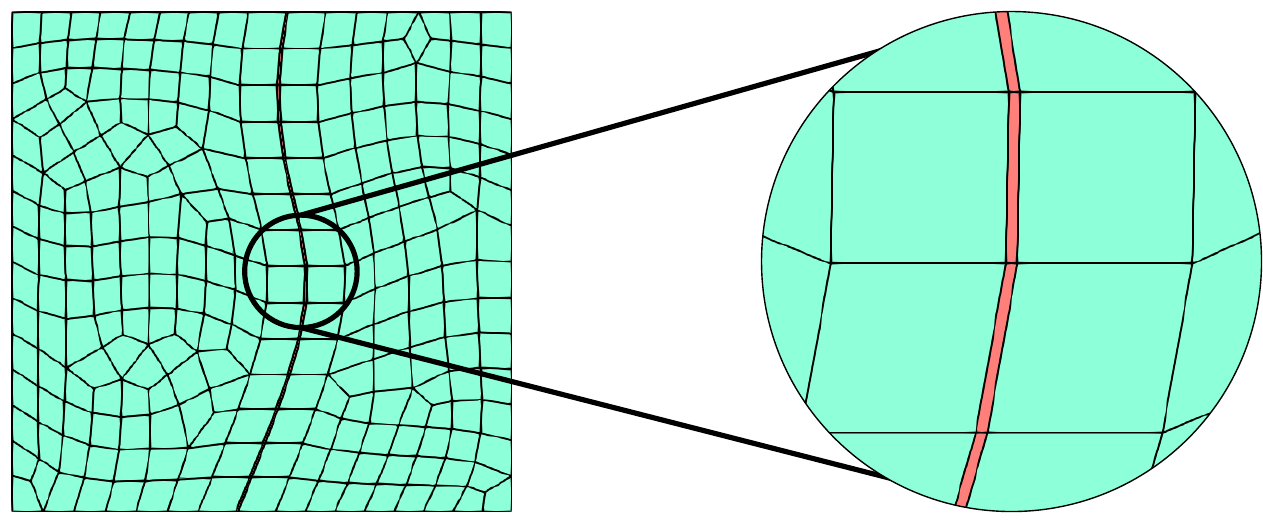}
  \caption{Example of mesh used for the anisotropic test case, showing elements (highlighted in red) with an aspect ratio of 15.}
  \label{fig:aniso-mesh}
\end{figure}

In Remark \ref{rem:anisotropy}, it is mentioned that the additive Schwarz approach described presently could be used with mesh patches featuring anisotropy originating not only from the low-order refined Gauss-Lobatto mesh.
Solvers often struggle with highly anisotropic meshes, which are commonly used to resolve boundary layers when solving convection-dominated problems (e.g.\ high Reynolds number Navier-Stokes) \cite{Mavriplis1998}.
In this example, we consider a sequence of two-dimensional unstructured meshes, with a strip of high-aspect-ratio elements.
We begin with a mesh that is roughly isotropic (maximum aspect ratio 1.5), and increase the aspect ratio of the elements in the strip by a factor of 10, until reaching a maximum aspect ratio of 1500.
An example of one such mesh is shown in Figure \ref{fig:aniso-mesh}.

We solve the high-order constant-coefficient problem (with $p=3,7,11,15$) on this sequence of meshes, using the additive Schwarz preconditioner described here, and compare with BoomerAMG with Gauss-Seidel smoothing applied to the low-order refined problem (denoted LOR-AMR(G-S)).
For this problem, in order for the additive Schwarz method to be robust in the aspect ratio of the elements, the overlap of the subdomains must be independent of the aspect ratio.
To achieve this, subdomain patches containing anisotropic elements are extended to ensure that the overlap remains isotropic.
For this test case, we use $n=1,2,3$ additive Schwarz patches (denoted B($n$)), as well as one patch per vertex (about 250 overlapping patches for this test case), where patches containing anisotropic elements are enlarged to obtain isotropic overlap (denoted B(V)).
In Figure \ref{fig:aniso-iters}, we compare the number of conjugate gradient iterations required to reduce the residual by a factor of $10^8$.
We note that algebraic multigrid applied to the low-order refined system is strongly dependent on both the polynomial degree and on the anisotropy of the mesh.
On the other hand, since the additive Schwarz patches have isotropic overlap, the hypotheses of Lemma \ref{lem:uj} are satisfied, and so the resulting space decomposition is stable.
Consistent with this, the iteration counts using the additive Schwarz solver remain constant with respect to the aspect ratio.
Slight increases in iteration counts are observed for higher polynomial degrees and increased number of subdomains, as is also observed in the preceding numerical examples.

\begin{figure}
  \centering

  \includegraphics[width=0.5\linewidth]{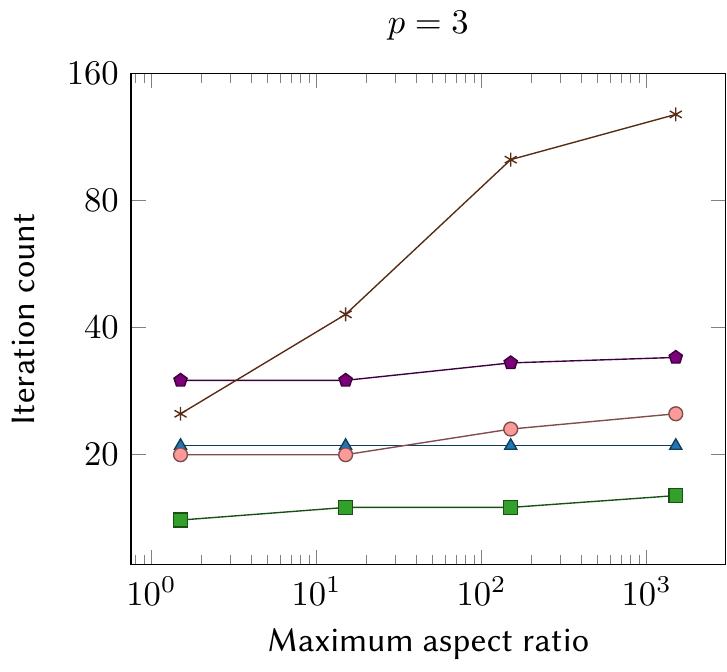}%
  \includegraphics[width=0.5\linewidth]{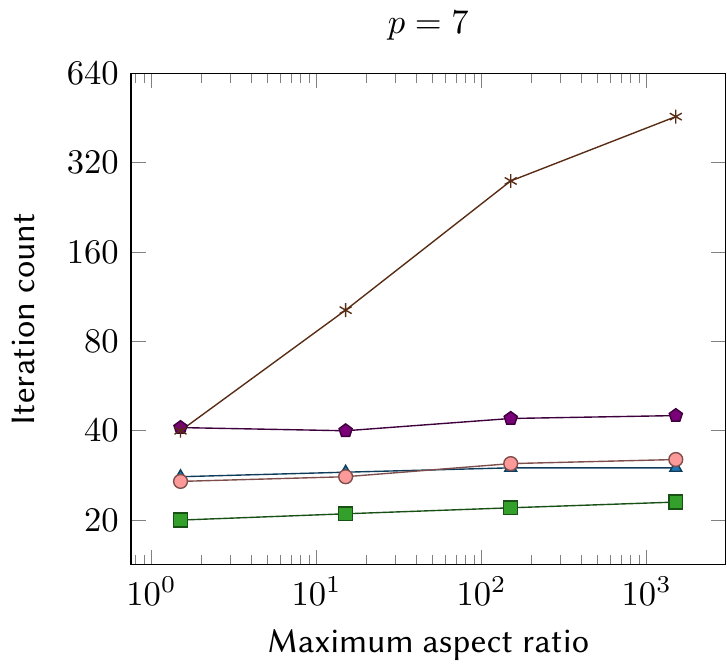}

  \vspace{\floatsep}

  \includegraphics[width=0.5\linewidth]{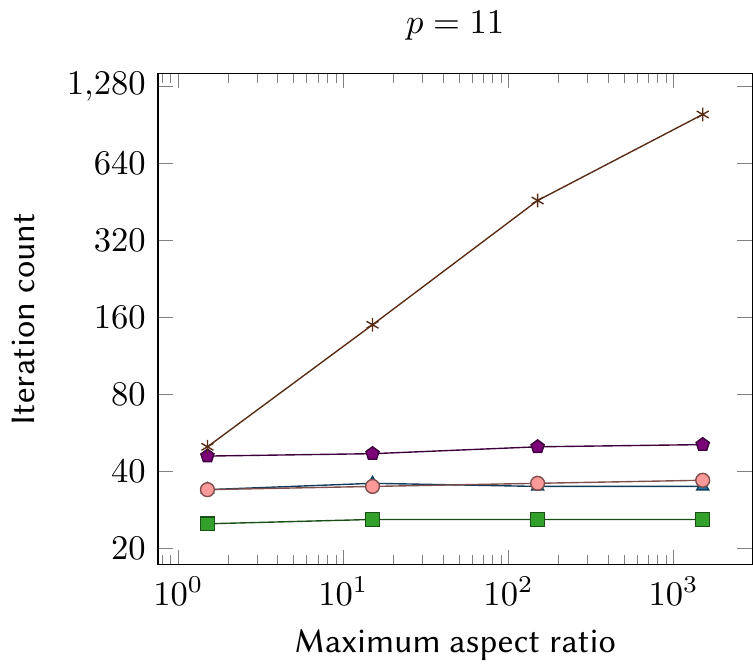}%
  \includegraphics[width=0.5\linewidth]{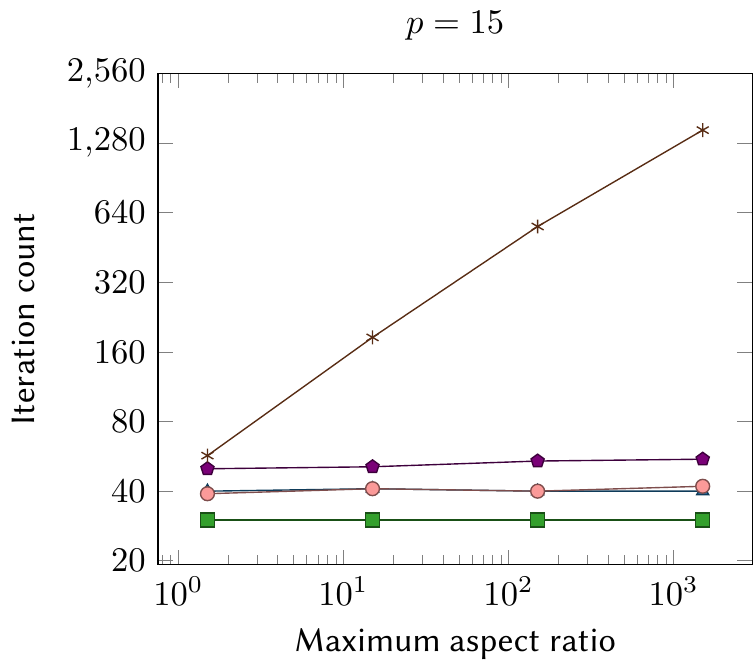}

  \vspace{\floatsep}

  \includegraphics{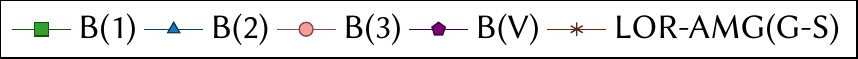}

  \caption{Convergence on a sequence of meshes with anisotropic elements, showing number of conjugate gradient iterations required to reduce the residual by a factor of $10^8$.
  B($n$) indicates additive Schwarz with $n$ subdomains, B(V) indicates additive Schwarz with vertex patches with isotropic overlap, and LOR-AMG(G-S) indicates one V-cycle of BoomerAMG applied to the low-order refined system.}
  \label{fig:aniso-iters}
\end{figure}

\section{Conclusions} \label{sec:conclusions}

In this work, we introduced a matrix-free preconditioner for high-order finite element discretizations of elliptic problems based on a low-order refined methodology.
The low-order refined system is sparse, and the associated matrix can be formed in linear time and memory.
The low-order refined system is preconditioned using a patch-based overlapping additive Schwarz method.
The local approximate solvers in the Schwarz method use a structured geometric multigrid technique with ordered ILU smoothing to treat the anisotropy of the low-order discretization.
The extension to discontinuous Galerkin discretizations is performed naturally in the additive Schwarz context.
The resulting preconditioners are robust in the discretization parameters $h$, $p$, and the DG penalty parameter $\eta$.
Numerical experiments are performed on a variety of geometries and meshes.
The robustness of the preconditioner is verified on test cases involving variable coefficients with sharp gradients and directional biases.
The efficiency of the method is compared with a FEM-SEM preconditioner using algebraic multigrid for the low-order refined system.

\section{Acknowledgements}

The author thanks Tz.\ Kolev and D.\ Kalchev for helpful and insightful conversations.
Lawrence Livermore National Laboratory is operated by Lawrence Livermore National Security, LLC, for the U.S.\ Department of Energy, National Nuclear Security Administration under Contract DE-AC52-07NA27344.
LLNL-JRNL-787240.

\bibliographystyle{siamplain}
\bibliography{refs}

\end{document}